\DeclareMathOperator{\Gr}{\Gamma}
\DeclareMathOperator{\Sgen}{\mathrm{S}}
\DeclareMathOperator{\GHto}{\xrightarrow{\text{GH}}}
\DeclareMathOperator{\N}{\mathbb{N}}
\DeclareMathOperator{\Z}{\mathbb{Z}}
\DeclareMathOperator{\R}{\mathbb{R}}
\DeclareMathOperator{\Green}{\mathsf{G}}
\DeclareMathOperator{\p}{\mathsf{p}}
\DeclareMathOperator{\q}{\mathsf{q}}
\DeclareMathOperator{\g}{\mathfrak{g}}
\DeclareMathOperator{\tor}{\text{tor}}
\DeclareMathOperator{\loz}{\scaleto{\blacklozenge}{4pt}}
\newtheorem*{thm}{Theorem}
\begin{document}
\title{The asymptotic shape theorem for the frog model on finitely generated abelian groups}\thanks{This study was financed in part by the Coordena\c{c}\~ao de Aperfei\c{c}oamento de Pessoal de N\'ivel Superior - Brasil (CAPES) - Finance Code 001.}\thanks{The first author was partially supported by grant \#2017/10555-0, S\~ao Paulo Research Foundation (FAPESP).}
\author{Cristian F. Coletti}\address{Centro de Matem\'atica, Computa\c{c}\~ao e Cogni\c{c}\~ao, Universidade Federal do ABC, 
Av. dos Estados, 5001.
09210-580 Santo Andr\'e, S\~ao Paulo,
Brazil. \email{cristian.coletti@ufabc.edu.br \ \& \ lucas.roberto@ufabc.edu.br}}
\author{Lucas R. de Lima}\sameaddress{1}
%
%
\begin{abstract}
    We study the frog model on Cayley graphs of groups with polynomial growth rate $D \geq 3$. The frog model is an interacting particle system in discrete time. We consider that the process begins with a particle at each vertex of the graph and only one of these particles is active when the process begins. Each activated particle performs a simple random walk in discrete time activating the inactive particles in the visited vertices. We prove that the activation time of particles grows at least linearly and we show that in the abelian case with any finite generator set the set of activated sites has a limiting shape.
\end{abstract}
%
%
\subjclass[2010]{60K35, 60D05, 52A22, 60F15, 60J10}
\keywords{Shape theorem, frog model, Cayley graph, interacting particle system}
\maketitle
\section{Introduction and main result}

We consider the frog model on a Cayley graph $\mathcal{C}(\Gr, \Sgen)$ with polynomial growth rate $D \geq 3$. The model describes an interacting particle system where each particle may be in one of two states, active or inactive. Firstly we introduce the model in a descriptive way and the formal definition will be given in the next subsection. The inactive particles remain in the same place until they become active, which occurs when an active particle visits its site. Once a particle is activated, it starts a simple random walk and does not return to the inactive state. The described process is widely known as \emph{the frog model}, since the particles can be seen as frogs jumping between the neighboring vertices of a graph performing an awakening process. The model can also be interpreted as a rumor transmission model where the active particles are individuals carrying an information which is shared with the inactive particles (see \cite{lebensztayn2013}).

We set the initial configuration of our model with a particle at each vertex of $\mathcal{C} (\Gr, \Sgen)$ where there is only one active particle at time zero, which we can choose to be in $e \in \Gr$, the neutral element of the group $(\Gr,.)$.  In section \ref{sec.activation.time} we proceed with the study of the activation time of the particles in the system and it is shown that it presents at least linear growth. In section \ref{sec.asymp.shape} we consider that $\Gr$ is a finitely generated abelian group to prove the asymptotic shape theorem.

The model has been extensively investigated on the hypercubic lattice $\Z^D$. It was initially studied by \citet{telcs1999} as \emph {the egg model}. Later, \citet*{alves2002} studied the same model under the name of \emph{the frog model} giving the first proof of the shape theorem on $\Z^D$ with $D\geq  2$. To the best of our knowledge, the present work is the first to consider the frog model in a more general algebraic structure, namely, on groups of polynomial growth for any symmetric and finite generator set. It covers the case $\Z^D $ with $D\geq 3$. We restrict our attention to obtain a shape theorem in the abelian case. There are also other variations of the model on other structures such as the frog model on trees \cite{hoffman2017, lebensztayn2005}, in the continuum \cite{beckman2018} and, more generally, on any discrete set associated with a set of paths \cite{kosygina2017}. We can also find in the literature other shape theorems for some variations of the frog model. For instance, \citet{ramirez2004} studied a theorem for the continuous-time model on $\Z^D$ while an analogous result for the frog model on trees was studied by \citet{hoffman2019}

\subsection{Description of the model}

To provide a formal definition of the model, we first introduce the structure and, subsequently, the random variables that characterize the process.

\subsubsection{Cayley graphs of polynomial growth}

Let $(\Gr,.)$ be a group finitely generated by a symmetric $\Sgen \subseteq  \Gr$, \emph{i.e.}, if $s \in \Sgen$, then $s^{-1} \in \Sgen$. We define the \emph{undirected Cayley graph} associated to $\Gr$ and $\mathrm{S}$ by $\mathcal{C}(\Gr, \mathrm{S}) = (V,E)$, where $V=\Gr$ is the set of vertices and $E = \big\lbrace\{sx,x \} : s \in \mathrm{S}, x \in \Gr \big\rbrace$ is the set of edges. In our model we consider that $\Sgen$ is finite and $e\not\in\Sgen$, where $e$ is the neutral element of $\Gr$ (with no loops).

Let us denote by $\mathscr{P}(x,y)$ the set of finite paths from $x$ to $y$ and let each $p \in \mathscr{P}(x,y)$ be given by $p=(x=x_0, x_1, \dots, x_{m-1},x_m = y)$ with $\{x_{i-1},x_{i}\} \in E$. A \emph{word metric} $d_{\mathrm{S}}$ associated to $\mathcal{C}(\Gr,\Sgen)$ is a metric such that, for all $x,y \in \Gr$,
\[
    d_S(x, y) = \inf\left\lbrace \sum\limits_{i=1}^{|p|} w_S(x_{i-1}, x_{i}): p \in \mathscr{P}(x,y)\right\rbrace,
\]
where $w_S: E \to \R_+^*$ defines a weight on the edges of $\mathcal{C}(\Gr,\Sgen)$ with $w_S(z,sz) =w_S(z,s^{-1}z)$ for every $z\in \Gamma$ and $s\in S$.

We denote by $d$ the word metric such that $d(sx,x) = 1$ for all $x \in \Gr$ and $s \in \Sgen$. We associate to $d$ a function $\|\cdot\|_1 : \Gr \to \R_+$ given by $\|x\|_1 = d(e,x)$ for every $x \in \Gr$.

Let $B(x, r) = \{ y \in \Gr : d(x,y) \leq r \}$ be the \emph{ball with radius $r$ centered at $x$} given by the metric $d$ and, more generally, let $B_f(x, r) = \{ y \in X : d_f(x,y) \leq r \}$ be the \emph{ball centered at $x \in X$ with radius $r$} associated to a pseudometric $d_f$ on $X$. We say that $\mathcal{C}(\Gr,\Sgen)$ has \emph{polynomial growth} if there exist $D' \in \N^*$ and a constant $C>0$ such that
\[
    |B(x, r)| \leq C r^{D'}
\]
for any $x\in \Gr$ and $r\in \N^*$. In fact, by a theorem of \citet{bass1972}, if a Cayley graph $\mathcal{C}(\Gr,\Sgen)$ has polynomial growth, then there exist $c>0$ and $D\in\N$ such that
\[
    \frac{1}{c}r^D \leq |B(x, r)| \leq c r^D.
\]
We call such $D$ the \emph{polynomial growth rate} of $\mathcal{C}(\Gr,\Sgen)$.

Let $[x,y] := xyx^{-1}y^{-1}$ denote the commutator element of $x$ and $y$ in $\Gr$ and define\break$[H_1,H_2] := \big\langle [h_1,h_2] : h_1 \in H_1, h_2\in H_2 \big\rangle$ where  $H_1,H_2 \subseteq \Gr$. Let $(C^n(\Gr))_{n\in\N}$ be a decreasing sequence of subgroups given by $C^0(\Gr) = \Gr$ and $C^{i+1}(\Gr)=[\Gr,C^i(\Gr)]$. We say that a group $N$ is \emph{nilpotent} of degree $k$ if there exists $k = \inf \big\lbrace n \in \N : C^n(N)=\{e\} \big\rbrace < \infty$. The group $\Gr$ is called \emph{virtually nilpotent} it there exists a $N \unlhd \Gr$ nilpotent such that $[\Gr:N] < \infty$.

When considering $\Gr$ finitely generated by $\Sgen$ such that $\mathcal{C}(\Gr,\Sgen)$ has polynomial growth, it follows from a theorem of \citet[p.~54]{gromov1981} that $\Gr$ is virtually nilpotent. By the structural theorem for finitely generated abelian groups, if $\Gr$ is abelian and finitely generated by $\Sgen$, then $\mathcal{C}(\Gr,\Sgen)$ has polynomial growth rate $D$ and $\Gr$ is isomorphic to an additive group 
\begin{equation} \label{structural.abelian.thm}
    \Gr \cong \Z^D \oplus \Z_{m_1} \oplus \Z_{m_2} \oplus \cdots \oplus \Z_{m_\ell}
\end{equation}
with $m_i \in \N$ for all $i \in \{1,2, \dots , \ell\}$ and $m_j ~|~ m_{j+1}$ for $j\in \{1,2, \dots , \ell-1\}$. It is worth pointing out that even though $\Gr$ is isomorphic to $\Z^D$ with a torsion subgroup, the corresponding Cayley graphs of either $\Gr$ or $\Gr/\tor\Gr \cong \Z^D$ are not necessarily isomorphic to the nearest neighbor edge (hypercubic) $\Z^D$ lattice.

\subsubsection{The frog model}
Consider the Cayley graph $\mathcal{C}(\Gr,\Sgen)$ with polynomial growth rate $D \geq 3$. Associate to each $x \in \Gr$ the probability space $(\Omega_x,\mathscr{F}_x,\mathbb{P}_x)$ with $\Omega_x := \{x\} \times \Sgen^{\N^*}$. We define the \emph{simple random walk starting at} $x$ as the sequence of random elements $\big(\check{S}_n^x\big)_{n\in\mathbb{N}}$ relative to the position of the particle at time $n$ where $\check{S}_0^x := x$ and, for a given $\check{\omega} = (x,(\xi_i)_{i \in \N^*}) \in \Omega_x$,
\[
    \Check{S}^x_n (\check{\omega}) = \xi_n\xi_{n-1} \dots \xi_2\xi_1x.
\]
Here $\mathbb{P}_x \left( \Check{S}^x_{n+1} = s\Check{S}^x_{n} \right) = 1/|\mathrm{S}|$ for every $s \in \mathrm{S}$ and $n\in\N$. We consider on $\Omega_x$ the $\sigma$-algebra\break$\mathscr{F}_x := \sigma \big( \check{S}_n^x : n \in \N \big)$. Now, we define the probability space for the frog model as $(\Omega, \mathscr{F},\mathbb{P})$ where $\Omega = \prod\limits_{x \in \Gr} \Omega_x$, $\mathscr{F}$ is the product $\sigma$-algebra and $\mathbb{P}$ is the corresponding product probability measure. For each $x \in \Gr$, let $\pi_x: \Omega \to \Omega_x$ be the projection such that for a given $\omega \in \Omega$, we have that $\omega = (\pi_x(\omega))_{x\in \Gr}$.

The family of sequences of random elements $\big\lbrace(S_n^x)_{n\in\mathbb{N}} : x \in \Gr\big\rbrace$ given by $S^x_n := \check{S}_n^x \circ \pi_x$ represent the independent simple random walks of every particle on the graph.

For $x,y \in \Gr$ define a random variable $t(x,y) = \inf\{ n\in \mathbb{N} : S_n^x = y \}$. Note that $t(x,y) = \infty$ with positive probability since the random walks on $\Gr$ are transient whenever its polynomial growth rate is at least $3$ \cite[see][p.~R59]{burioni2005}. The time when the particle with initial position $y$ becomes active in the process starting with the one active particle at $x$ is given by the random variable
\[T(x,y) = \inf\left\lbrace \sum_{i=1}^m t(x_{i-1},x_i) : x_0=x, x_1, \dots , x_m = y \right\rbrace.\]

Observe that $T$ is  not necessarily symmetric; also, in a further section, we will consider a random quasimetric $d_{\omega}$ with $d_{\omega}(e,x) = T(e,x)(\omega) ~\mbox{a.s.}$ The random set $B_{\omega}(e,n)$ corresponds to the set of the original positions of the active particles up to time $n$ starting from one active particle at $e$. In particular, the shape theorem studied in this paper refers to the behavior of this random set.

\subsection{On the convergence of metric spaces} \label{conv.metric.spaces}

Due to the abstract generality of the spaces in which we define the frog model, we will use some concepts about the convergence of metric spaces to study the growth of the random sets related to our process.

Given a metric space $(X,d_X)$ and non-empty subsets $A,B \subseteq X$, a \emph{$\varepsilon$-neighborhood} of $A$ is the set\break$[A]_\varepsilon := \{ x \in X : \exists a \in A (d_X(a,x)  < \varepsilon) \}$ and the \emph{Hausdorff distance} between $A$ and $B$ is given by
\[d_H(A,B) = \inf \big\lbrace \varepsilon > 0 : A \subseteq [B]_\varepsilon \text{ and } B \subseteq [A]_\varepsilon \big\rbrace.\]

Let $\left( (X_n,d_n) \right)_{n\in\N^*}$ be a sequence of compact metric spaces with uniformly bounded diameter. We say that $(X_n,d_n)$ converges to a compact metric space $(X,d_{\tilde{X}})$ \big(subspace of $(\tilde{X}, d_{\tilde{X}})$\big) in the \emph{Gromov-Hausdorff} sense if there exist isometric embeddings $\varphi_n: X_n \hookrightarrow \tilde{X}$ such that $\lim\limits_{n \to \infty}d_H(\varphi_n[X_n],X) = 0$. We denote such convergence by
\[(X_n,d_n) \GHto (X,d_X).\]

More generally, consider now a sequence $\left( (X_n,d_n,o_n) \right)_{n\in\N^*}$ where every $(X_n,d_n)$ is a locally compact metric space and $o_n \in X_n$. The \emph{pointed Gromov-Hausdorff convergence} of $(X_n,d_n,o_n)$ to $(X,d_X,o)$ occurs if, for each $r > 0$, we have that $(B_n(o_n, r), d_n) \GHto (B_X(o, r), d_X)$ and we denote this convergence by
\[(X_n,d_n,o_n) \GHto (X,d_X,o).\]
The limit object given above is also known as the \emph{asymptotic cone} of $(X_n,d_n,o_n)$.

We will state the shape theorem for the frog model in terms of pointed Gromov-Hausdorff convergence. First we present a deterministic result on the convergence of rescaled word metrics due to \citet{pansu1983} and describe the construction of the limit space given by \citet{cantrell2017}.

\begin{thm}[{\citet{pansu1983}}]
    Let $\Gr$ be a virtually nilpotent group generated by a symmetric and finite $\Sgen \subseteq \Gr$. Then
    \[
        \left( \Gr, \frac{1}{n}d_S, e \right) \GHto (G_\infty, d_\infty, e),
    \]
    where $G_\infty$ is a simply connected real graded Lie group and $d_\infty$ is a right-invariant sub-Riemannian (Carnot-Caratheodory) metric homogeneous with respect to a family of homotheties $\{\delta_t\}_{t>0}$, \emph{i.e.}, $d_\infty(\delta_t(g),\delta_t(h)) = t~d_\infty(g,h)$ for all $t>0$ and $g,h \in G_\infty$. 
\end{thm}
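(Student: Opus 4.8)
The plan is to reduce the statement to the torsion-free nilpotent case, realize the group as a lattice in a nilpotent Lie group, build the graded limit $G_\infty$ from the associated graded Lie algebra, and only then establish the metric convergence through a homogenization/subadditivity analysis. Since the asymptotic cone is a quasi-isometry invariant (up to bi-Lipschitz equivalence) and $\Gr$ is virtually nilpotent, I first pass to a nilpotent finite-index subgroup $N \leq \Gr$; the torsion elements of a finitely generated nilpotent group form a finite characteristic subgroup $T$, and the quotient map $N \to N/T$ is a quasi-isometry, so for the purpose of constructing the limiting \emph{space} I may assume $N$ is torsion-free. The exact metric $d_\infty$ will instead be pinned down later by working with the generating set $\Sgen$ directly, since finite-index reductions only preserve the bi-Lipschitz type of the cone.

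By Malcev's theorem the torsion-free finitely generated nilpotent group $N$ embeds as a cocompact lattice in a unique simply connected nilpotent real Lie group $G$ with Lie algebra $\mathfrak{g}$. I would then take the lower central series filtration $\mathfrak{g} = \mathfrak{g}^{(1)} \supseteq \mathfrak{g}^{(2)} \supseteq \cdots \supseteq \mathfrak{g}^{(k+1)} = \{0\}$ and form the associated graded Lie algebra $\mathfrak{g}_\infty = \bigoplus_i \mathfrak{g}^{(i)}/\mathfrak{g}^{(i+1)}$. Because $\mathfrak{g}^{(i+1)} = [\mathfrak{g}^{(1)}, \mathfrak{g}^{(i)}]$, the induced bracket makes $\mathfrak{g}_\infty$ a stratified (Carnot) Lie algebra generated by its first layer $V_1 = \mathfrak{g}^{(1)}/\mathfrak{g}^{(2)}$. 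Setting $G_\infty = \exp(\mathfrak{g}_\infty)$ --- which is simply connected, being the exponential of a nilpotent Lie algebra --- and letting the homothety $\delta_t$ act as multiplication by $t^i$ on the $i$-th stratum gives a one-parameter family of automorphisms, yielding the graded group with its dilation structure. The image of $\Sgen$ spans $V_1$, so the gauge of the symmetric convex hull of these projections defines a norm on the horizontal layer; the associated right-invariant sub-Riemannian (Carnot--Carath\'eodory) metric $d_\infty$ is finite by the Chow--Rashevskii theorem, and the homogeneity $d_\infty(\delta_t g, \delta_t h) = t\, d_\infty(g,h)$ is immediate from the way $\delta_t$ rescales horizontal curves.

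The hard part is the metric convergence itself. Here I would introduce Malcev (exponential) coordinates on $G$ and observe that, under the rescaling $g \mapsto \delta_{1/n}(g)$, the Baker--Campbell--Hausdorff group law of $G$ converges coordinate-by-coordinate to the group law of $G_\infty$: the terms obstructing gradedness carry strictly larger dilation weight and are suppressed as $n \to \infty$. In parallel, subadditivity of the word length, $\|xy\|_1 \leq \|x\|_1 + \|y\|_1$, together with Fekete's lemma shows that $\tfrac{1}{n}\,d_S(e,\gamma^n)$ converges for every $\gamma \in N$; extending this to sequences $\gamma_n$ with $\delta_{1/n}(\gamma_n) \to g$ produces a well-defined homogeneous limit quasi-norm on $G_\infty$. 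I would then identify this limit quasi-norm with $d_\infty$ by matching discrete $\Sgen$-paths in $N$ against horizontal curves in $G_\infty$, using that the rescaled lattice $\delta_{1/n}(N)$ becomes dense and that horizontal polygonal approximations have length converging to the Carnot--Carath\'eodory length. Assembling these estimates uniformly over a ball $B(e,r)$ then yields, for each $r$, maps realizing $(B_n(e,r), \tfrac{1}{n}d_S) \GHto (B_\infty(e,r), d_\infty)$, which is precisely the asserted pointed Gromov--Hausdorff convergence.

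The principal obstacle is this last identification: showing that the limit is \emph{exactly} the Carnot--Carath\'eodory metric, and not merely a bi-Lipschitz-equivalent homogeneous metric, requires delicate control of the error terms in the dilated group law and a careful comparison of discrete geodesics in $N$ with sub-Riemannian geodesics in $G_\infty$. The subadditive argument alone gives existence and homogeneity of the limit norm but not its identity with $d_\infty$, so the decisive work lies in quantifying how closely minimizing $\Sgen$-paths shadow horizontal curves as the scale diverges.
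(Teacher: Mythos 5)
You should first note what you are being compared against: the paper does not prove this theorem at all --- it imports it from Pansu (1983) and merely sketches the construction of the limit space (pass to a finite-index nilpotent $N \unlhd \Gr$, quotient by $\tor N$, take the Mal'cev completion, rescale the quotient metric as in Cantrell--Furman \S 2.1). Your outline is a faithful road map of exactly that cited proof. But it contains one concretely wrong construction. You define $d_\infty$ from ``the gauge of the symmetric convex hull of the projections of $\Sgen$'' onto the first stratum $V_1 = \mathfrak{g}^{(1)}/\mathfrak{g}^{(2)}$. This recipe is correct only when $\Gr$ itself is nilpotent and $\Sgen$ projects under the abelianization of $N$; in the virtually nilpotent case $\Sgen$ generates $\Gr$, not $N$, there is in general no homomorphism $\Gr \to V_1$, and finite-order generators must project trivially. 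Concretely, take $\Gr = \Z \rtimes \Z_2$ the infinite dihedral group generated by two reflections $a,b$ with $a^2 = b^2 = e$, so $N = \langle ab \rangle \cong \Z$ has index $2$. Any projection of $\Sgen = \{a,b\}$ into $V_1 \cong \R$ is $\{0\}$, so your gauge is degenerate, whereas the true asymptotic cone is $\R$ with the norm determined by $\lim_n \frac{1}{n} d_S\big(e,(ab)^n\big) = 2$. The correct construction uses the metric induced on $N$ (equivalently on $\Gr' = N/\tor N$) by restricting $d_S$ --- which is \emph{not} a word metric of $N$ --- and takes its stable norm $\phi(x') = \lim_n \frac{1}{n} d_S(e,x^n)$ as the horizontal norm; this is what Pansu's appendix and the quotient-metric construction the paper points to actually do. Your own later subadditivity step produces precisely this stable norm, so the repair is to define $d_\infty$ from it; as written, your two definitions of the horizontal norm disagree, and the ``identification'' you propose between them would fail.

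Second, even with the definition repaired, the decisive step is announced but not performed, as you yourself acknowledge: Fekete's lemma gives convergence of $\frac{1}{n} d_S(e,\gamma^n)$ along powers, but upgrading this to the statement that the $\delta_{1/n}$-rescaled balls converge in the Gromov--Hausdorff sense to balls of \emph{exactly} the Carnot--Carath\'eodory metric (not merely a bi-Lipschitz-equivalent homogeneous quasimetric) requires the two-sided comparison of word geodesics with horizontal curves --- polygonal approximation with sublinear error, uniform control of the BCH correction terms under dilation, and equicontinuity over $B(e,r)$. That comparison is the bulk of Pansu's paper, so what you have is a correct and well-located plan with the analytic core still owed, plus the horizontal-norm construction above that must be replaced before the plan can go through in the virtually nilpotent generality the statement claims.
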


The construction of the limit space can be briefly described by taking $N \unlhd \Gr$ nilpotent with $[\Gr:N] < \infty$. Then we define a nilpotent and torsion-free group $\Gr' = N /\tor N$ where  $\tor N := \big\langle x \in N : \exists n\in\N^* (x^n = e)  \big\rangle$ is the \emph{the torsion subgroup} with  $\tor N \unlhd N$ finite \cite[see][\S A]{pansu1983}. We consider $G$ the real and simply connected Lie group given by the Mal'cev completion of $\Gr'$. Therefore $\Gr'$ is cocompact in $G$ and the Hausdorff distance between $\Gr$ and $\Gr'$ is finite \cite[see][p.~434]{pansu1983}. Considering the rescaled quotient metric, we can verify that $\Gr$ converges to $G_\infty$, as in the construction present in \cite[\S 2.1]{cantrell2017}.

When $\Gr$ is abelian and finitely generated, we can verify by \eqref{structural.abelian.thm} that $\Gr' = \Gr/\tor\Gr \cong \Z^D$ and $G = G_\infty \cong \R^D$ where $G_\infty$ is a Riemannian manifold when associated with the metric $d_\infty$.

\subsection{The asymptotic shape theorem}

Shape theorems are commonly studied for first passage percolation and other random growth models (see \cite{alves2002,auffinger2017,hoffman2019,ramirez2004}, for instance). The statement of the theorem describes the behavior of a random set which grows with time. It can be seen as an analogue of the Strong Law of Large Numbers for processes on graphs. Roughly speaking, this set coincides with the balls of a random pseudo-quasi metric and we seek to describe to which set it converges and the properties it has.

\begin{thrm}[Asymptotic shape theorem for the frog model on  $\mathcal{C}(\Gr,\Sgen)$] \label{shape.thm}
    Let $(\Gr,.)$ be an abelian group generated by a symmetric and finite $\Sgen \subseteq \Gr$ such that $\mathcal{C}(\Gr,\Sgen)$ has polynomial growth rate $D \geq 3$ and no loops. Consider the frog model defined on $\mathcal{C}(\Gr,\Sgen)$. Then there exists $\Omega' \subseteq \Omega$ such that $\mathbb{P}(\Omega') =1$ and, for all $\omega \in \Omega'$, 
    \[
        \left( \Gr, \frac{1}{n}d_{\omega}, e \right) \GHto \left( G_\infty, d_\phi, e \right)
    \]
     where $d_\omega(e,x) := T(e, x)(\omega)$, $G_\infty \cong \R^D$, $d_\phi$ is a right-invariant metric on $G_\infty$, not necessarily symmetric, homogeneous with respect to a family of homotheties $\{\delta_t\}_{t>0}$, and bi-Lipschitz with respect to a Riemannian metric on $G_\infty$.
\end{thrm}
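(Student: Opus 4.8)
The plan is to follow the subadditive-ergodic strategy of \cite{alves2002}, transported to the present abstract setting by means of Pansu's theorem and the Gromov--Hausdorff formalism. First I would record that the activation time is subadditive along paths: directly from its definition one has $T(x,z) \leq T(x,y) + T(y,z)$ for all $x,y,z \in \Gr$, and since $\Gr$ is abelian the family $\{T(x,y)\}$ is invariant under the diagonal translation action of $\Gr$ on $\Omega$. This invariance, together with the product structure of $\mathbb{P}$, makes the translation action ergodic, which is the input required to produce deterministic limits. The asymmetry of $T$ will be retained throughout, since it is precisely what makes the limit metric $d_\phi$ possibly non-symmetric.

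The analytic heart of the argument is a pair of linear bounds on $T(e,x)$. The lower bound $T(e,x) \geq c\,\|x\|_1$, valid in expectation and almost surely along rays, is already supplied by the study of the activation time in Section~\ref{sec.activation.time}. The reverse estimate, that $\mathbb{E}[T(e,x)] \leq C\,\|x\|_1$ with enough higher moments to feed the ergodic theorem, is the step I expect to be the main obstacle: because $D \geq 3$ the walks are transient, so a single prescribed chain of activations fails with positive probability and the naive path bound diverges. I would overcome this by exploiting the abundance of dormant particles, building activations through many independent walks and a renormalization (or restart) argument so that, at a coarse scale, some successful activation chain reaches $x$ in time $O(\|x\|_1)$ with overwhelming probability, yielding the moment control needed for Kingman's theorem.

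With the moment bounds in hand, I would apply the subadditive ergodic theorem to the sequence $n \mapsto T(e,nx)$ for each fixed $x \in \Gr$, obtaining a deterministic \emph{time constant} $\mu(x) = \lim_{n} T(e,nx)/n$ existing almost surely and in $L^1$. Homogeneity and subadditivity of $T$ pass to $\mu$, so that $\mu$ extends to a homogeneous, subadditive, possibly asymmetric functional; the torsion part drops out because $\mu$ vanishes on $\tor\Gr$, so $\mu$ descends to $\Gr/\tor\Gr \cong \Z^D$. Comparing $\mu$ with the word metric through the two linear bounds shows that $\mu$ is finite, positive off the identity, and bi-Lipschitz equivalent to $\|\cdot\|_1$, which promotes it to a genuine asymmetric norm. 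This same equivalence furnishes the uniform Lipschitz bounds on balls that upgrade pointwise convergence of $\tfrac{1}{n}d_\omega(e,\cdot)$ to uniform convergence on bounded regions.

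Finally I would assemble the Gromov--Hausdorff statement. Since $\Gr$ is abelian, Pansu's theorem identifies the scaling limit of $\left(\Gr,\tfrac{1}{n}d_S,e\right)$ as $(G_\infty,d_\infty,e)$ with $G_\infty \cong \R^D$ and $d_\infty$ Riemannian, via the construction of \cite{cantrell2017}. Running that construction with the averaged weight $\phi$ read off from $\mu$ in place of the unit weight produces the limit metric $d_\phi$ on $G_\infty$; the uniform convergence of $\tfrac{1}{n}d_\omega$ on balls, valid on a single full-probability event $\Omega'$ obtained by intersecting the countably many almost-sure events indexed by the elements of $\Gr$, then yields $\left(\Gr,\tfrac{1}{n}d_\omega,e\right) \GHto (G_\infty,d_\phi,e)$ for every $\omega \in \Omega'$. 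Right-invariance of $d_\phi$ descends from the translation invariance of $T$, homogeneity under the homotheties $\{\delta_t\}_{t>0}$ from the $1/n$ scaling, and the bi-Lipschitz comparison with the Riemannian metric $d_\infty$ from the two linear bounds relating $T$ and $d_S$.
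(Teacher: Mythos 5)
Your outline is correct and, at the level of strategy, essentially the paper's: a subadditive cocycle with an ergodic translation action, two-sided linear control of $T$, removal of torsion, and assembly of the limit via the Pansu/Cantrell--Furman construction; but two deviations are worth recording. First, where you run Kingman's theorem ray by ray on $n \mapsto T(e,x^n)$ and then extend the time constant $\mu$ by homogeneity, density of directions and Lipschitz continuity, the paper instead invokes the subadditive ergodic theorem for torsion-free nilpotent groups of Austin and Cantrell--Furman (Proposition \ref{ergodic.cocicle.nilpotent.thm}, giving Corollary \ref{cor.subadd.cocycle}) applied to $T'$ on $\Gr' = \Gr/\tor\Gr$; in the abelian case the two routes are equivalent, but the paper's choice matches its intermediate results, which are stated for all virtually nilpotent $\Gr$, and it produces the homogeneous subadditive $\phi$ on $\mathfrak{g}_\infty^{\text{ab}}$ in one stroke, with your separate extension step absorbed into the paper's compactness argument with $y_n'$ and $z_n' = (z')^{h_n}$. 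Relatedly, your remark that ``$\mu$ vanishes on $\tor\Gr$, hence descends'' is a lighter substitute for Lemma \ref{lm.T.bounded.torsion.as}, which gives the \emph{uniform} almost-sure comparison over balls that the Gromov--Hausdorff statement actually requires; your uniform-convergence step can absorb this, via the same Borel--Cantelli argument. Second, you misallocate the two linear bounds: the lower bound $\|x\|_1 \leq T(e,x)$ is deterministic (an active particle crosses one edge per unit of time), not a Section \ref{sec.activation.time} result, whereas the bound you flag as ``the main obstacle'' --- at-most-linear growth of $T(e,\cdot)$ with strong tails --- is precisely what Section \ref{sec.activation.time} already supplies in Propositions \ref{prop.T.finite.as} and \ref{prop.T.div.dist}, proved by the shell construction adapted from \cite{alves2002} that your renormalization sketch gestures at; in the proof of the theorem you may simply cite these, and their stretched-exponential tails (summable against the polynomial volume growth $|B(e,rn)| \leq c\,n^D$) are exactly what makes your Borel--Cantelli upgrade from radial limits to uniform convergence on balls, and hence the pointed GH convergence, go through.
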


The need to state the theorem in terms of a centered Gromov-Hausdorff convergence is a consequence of the fact that $\Gr$ may fail to be isomorphic to a subgroup of $G_\infty$, as can be seen in \eqref{structural.abelian.thm} and in subsection \ref{conv.metric.spaces}. If $\Gr \cong \Z^D$ the shape theorem could be stated in the same way as the classical one, where there exists $\Omega' \subseteq \Omega$ with $\mathbb{P}(\Omega') =1$ such that, given $\omega \in \Omega'$ and $\varepsilon>0$, there exists $n_0 \in \N$ such that, if $n \geq n_0$,
\[
    B_{\phi}(0, nr(1-\varepsilon))\cap \Gr \subseteq B_{\omega}(0, nr) \subseteq B_{\phi}(0, nr(1+\varepsilon)).
\]

In particular, the Hausdorff distance between $\delta_{1/n}[B_\omega(e,nr)]$ and $B_{\phi}(e,r)$ tends to $0$ for all $r>0$, where $\delta_{t'}$ is a homothety on $G_\infty \cong \R^D$ and $d_\phi$ is bi-Lipschitz with respect to $d_\infty$. We will use a subadditive ergodic theorem to prove Theorem \ref{shape.thm}. The condition $D \geq 3$ in that theorem is a consequence of the application of some known results for random walks on groups (see \S\ref{sec.activation.time}). Therefore, the case $D \leq 2$ should be treated separately and we do not aim to find a sharp lower bound for $D$ in our shape theorem.

\section{On random walks and activation times} \label{sec.activation.time}

To obtain results for the random variable $T(\cdot,\cdot)$ we adopt a similar approach to that of \citet*{alves2002} generalizing some of their results. Consider the \emph{heat kernel} of the random walk starting at $x$ and arriving at $y$ given by
\[
    \p_n(x,y) = \mathbb{P}(S_n^x=y).
\]

We define the \emph{Green's function} as the mean number of visits from a particle starting at $x$ visiting $y$ up to time $n$ 
\begin{equation} \label{Green.def}
    \Green_n(x,y) = \sum\limits_{i=0}^n \p_i(x,y)
\end{equation}
and $\Green(x,y)= \lim\limits_{n\to\infty}\Green_n(x,y)$. Let the probability of a simple random walk starting at $x$ reach site $y$ up to time $n$ be given by $\q_x(n,y) = \mathbb{P}\left(t(x,y) \leq n\right)$.
\begin{prpstn}[{\citet[p.~731]{alexopoulos2002}}] \label{heat.kernel.pn}
    There exists a constant $c >0$ such that, for all $n \in \mathbb{N}^*$,
    \begin{equation*}
        \label{upper.bound.heat.kernel}
        \p_n(x,y) \leq c n^{-D/2}\exp{\left( -\frac{d(x,y)^2}{c n} \right)}.
    \end{equation*}
    Moreover, if the graph is not bipartite, then there exists a constant $c' > 0$, such that, for all $n \in \mathbb{N}^*$,
    \begin{equation}
        \label{lower.bound.heat.kernel}
        \p_n(x,y) \geq \frac{1}{c'}n^{-D/2}\exp{\left( -c'\frac{d(x,y)^2}{n} \right)}
    \end{equation}
     whenever $d(x,y) \geq n/c'$. For combinatorial reasons, \eqref{lower.bound.heat.kernel} holds for bipartite graphs when $n$ has the same parity of $d(x,y)$.
\end{prpstn}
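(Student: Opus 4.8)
The estimate is a discrete-time Gaussian (Aronson-type) two-sided heat kernel bound, and the plan is to deduce it from two structural features of $\mathcal{C}(\Gr,\Sgen)$: \emph{volume doubling} and a scale-invariant $L^2$ \emph{Poincar\'e inequality} on balls. By Delmotte's theorem for graphs (the discrete counterpart of the Grigor'yan--Saloff-Coste characterization) these two conditions are equivalent to the parabolic Harnack inequality, and hence to two-sided estimates of the form
\[
    \frac{c_1}{|B(x,\sqrt{n})|}\exp\left(-\frac{d(x,y)^2}{c_2 n}\right) \leq \p_n(x,y) \leq \frac{c_3}{|B(x,\sqrt{n})|}\exp\left(-\frac{d(x,y)^2}{c_4 n}\right).
\]
Inserting the polynomial growth $\tfrac{1}{c}r^D \leq |B(x,r)| \leq c\,r^D$, so that $|B(x,\sqrt{n})| \asymp n^{D/2}$, turns these into the asserted bounds.

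First I would record volume doubling, which is immediate: from $|B(x,2r)| \leq c(2r)^D = c\,2^D r^D \leq c^2 2^D |B(x,r)|$ one gets $|B(x,2r)| \leq C|B(x,r)|$ uniformly in $x$ and $r$. The real content is the Poincar\'e inequality: there exist constants $C,\lambda \geq 1$ such that
\[
    \sum_{y \in B(x,r)} \bigl|f(y) - \overline{f}_{B(x,r)}\bigr|^2 \leq C r^2 \sum_{\substack{u,v \in B(x,\lambda r)\\ u \sim v}} \bigl|f(u)-f(v)\bigr|^2
\]
holds for every finitely supported $f$, where $\overline{f}_{B}$ denotes the average of $f$ over $B$. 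This is the step where the algebra enters: because $\mathcal{C}(\Gr,\Sgen)$ has polynomial growth it is virtually nilpotent by Gromov's theorem, and a uniform Poincar\'e inequality on balls is known for Cayley graphs of such groups. I expect establishing (or invoking) it to be the main obstacle, since, unlike doubling, it is not a mere consequence of the volume bound but genuinely reflects the group structure.

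For the upper bound a more self-contained route runs in parallel and is worth carrying out. The volume lower bound yields a Nash inequality for the Dirichlet form of the walk, whence Nash's iteration gives the on-diagonal decay $\p_n(x,x) \leq C n^{-D/2}$; Davies' exponential perturbation method — conjugating the transition operator by $e^{\pm\psi}$ and optimizing over the weight $\psi$ — then promotes this to the Gaussian off-diagonal upper bound. For the lower bound the base case is the near-diagonal estimate $\p_n(x,y) \geq c\,n^{-D/2}$ for $d(x,y) \leq \varepsilon\sqrt{n}$, coming from the Harnack inequality. To reach the stated regime $d(x,y) \geq n/c'$ I would chain: split $n$ steps into $k \asymp d(x,y)^2/n$ blocks, join $x$ to $y$ through roughly equally spaced intermediate vertices $z_0=x,\dots,z_k=y$, and apply Chapman--Kolmogorov, $\p_n(x,y) \geq \prod_{i=1}^{k} \p_{n/k}(z_{i-1},z_i)$; with this choice of $k$ each hop sits at the diffusive scale, and the product produces the factor $\exp(-c'\,d(x,y)^2/n)$.

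Finally I would deal with periodicity. On a bipartite Cayley graph — for instance $\Z^D$ with the standard generators — the walk has period two, so $\p_n(x,y)=0$ unless $n$ and $d(x,y)$ have the same parity; this is exactly why the lower bound is claimed only under the matching-parity hypothesis. To run the Harnack and chaining arguments without periodicity obstructing them, I would either replace $\p_n$ by the aperiodic quantity $\p_n + \p_{n+1}$, or work with $P^2$ separately on each parity class, recovering the parity-restricted lower bound in the bipartite case and the unrestricted one otherwise.
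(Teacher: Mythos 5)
The paper contains no proof of this proposition: it is quoted directly from \citet[p.~731]{alexopoulos2002}, so your attempt has to be measured against that source rather than against an argument in the text. As a reconstruction it is sound, and it takes a genuinely different, more axiomatic route. Alexopoulos works directly with (centered) random walks on groups of polynomial growth, using the structure theory behind Gromov's theorem and Harnack-type arguments tailored to the group setting to obtain the two-sided Gaussian bounds, parity effects included; you instead invoke the Delmotte characterization for graphs (volume doubling plus a scale-invariant $L^2$ Poincar\'e inequality on balls, the latter known for Cayley graphs of polynomial growth, e.g.\ by Coulhon and Saloff-Coste), with the Nash/Davies argument for the upper bound and Harnack-plus-chaining for the lower bound as the concrete mechanism. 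This buys generality --- nothing in your argument uses the group beyond doubling and Poincar\'e --- at the price of two heavy black boxes, and you correctly identify the Poincar\'e inequality as the step where the algebraic structure genuinely enters. Two points deserve emphasis. First, Delmotte's discrete-time equivalence requires a uniform laziness-type hypothesis ($p(x,x)\geq\alpha$, his $\Delta(\alpha)$ condition), which the simple random walk on a loopless Cayley graph violates; so your closing parity fix (working with $\p_n+\p_{n+1}$, or with $P^2$ on each parity class) is not an optional refinement but the step that makes the citation applicable at all, and you were right to include it. Second, the regime in the statement, ``whenever $d(x,y)\geq n/c'$'', is evidently a typo for $d(x,y)\leq n/c'$ (equivalently $n\geq c'\,d(x,y)$, which is how the bound is actually used in Proposition \ref{prop.qx.lim.inf}, where $n\geq\max\{d(x,y)^2,c'd(x,y)\}$): since $\p_n(x,y)=0$ once $d(x,y)>n$, no Gaussian lower bound can hold in the literally stated regime, and your chaining with $k\asymp d(x,y)^2/n$ blocks, each hop at diffusive scale, proves exactly the corrected one.
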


\begin{prpstn} \label{prop.qx.lim.inf}
    Let $\mathcal{C}(\Gr,\Sgen)$ have polynomial growth rate $D \geq 3$. Then there exists a constant $C>0$ such that
    \[
        \q_x(n,y) \geq \frac{C}{d(x,y)^{D-2}}
    \]
    for all $n \geq \max\{d(x,y)^2,c'd(x,y)\}$, where $c'$ is given by \eqref{lower.bound.heat.kernel}.
\end{prpstn}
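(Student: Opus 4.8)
The plan is to compare $\q_x(n,y)$ with the truncated Green's function from \eqref{Green.def} and then to extract the polynomial lower bound from the Gaussian estimate \eqref{lower.bound.heat.kernel}, following and adapting the scheme of \citet{alves2002}. First I would fix $x,y$, write $\tau = t(x,y)$, and decompose the expected number of visits to $y$ before time $n$ according to the first hitting time of $y$. By the strong Markov property applied at $\tau$, on the event $\{\tau \le n\}$ the number of visits to $y$ in $\{\tau, \dots, n\}$ has conditional mean $\Green_{n-\tau}(y,y) \le \Green(y,y)$, whence
\[
    \Green_n(x,y) = \sum_{i=0}^n \p_i(x,y) = \mathbb{E}\big[\mathbf{1}_{\{\tau \le n\}}\,\Green_{n-\tau}(y,y)\big] \le \Green(y,y)\,\q_x(n,y).
\]
Since $\mathcal{C}(\Gr,\Sgen)$ is vertex-transitive one has $\Green(y,y) = \Green(e,e)$, and this gives the key inequality $\q_x(n,y) \ge \Green_n(x,y)/\Green(e,e)$. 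It then suffices to bound the numerator below and the denominator above.

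For the denominator, transience of the walk for $D \ge 3$ guarantees $\Green(e,e) < \infty$: concretely, the diagonal upper bound in Proposition \ref{heat.kernel.pn} (taken with $d(e,e)=0$) yields $\p_i(e,e) \le c\,i^{-D/2}$, which is summable precisely because $D/2 > 1$, so $\Green(e,e)$ is a finite constant depending only on $\mathcal{C}(\Gr,\Sgen)$.

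For the numerator I would discard all but a diffusive window of summation indices. Using the hypothesis $n \ge d(x,y)^2$, the window $W = \{ i \in \N : \tfrac12 d(x,y)^2 \le i \le d(x,y)^2 \}$ is contained in $\{0,\dots,n\}$ and has cardinality of order $d(x,y)^2$. For $i \in W$ one has $d(x,y)^2/i \le 2$, so the exponential factor in \eqref{lower.bound.heat.kernel} is bounded below by the constant $e^{-2c'}$, while $i^{-D/2} \ge d(x,y)^{-D}$; hence each admissible term satisfies $\p_i(x,y) \gtrsim d(x,y)^{-D}$, and summing over the $\asymp d(x,y)^2$ indices in $W$ gives $\Green_n(x,y) \gtrsim d(x,y)^{2-D}$. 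Combining this with the two preceding paragraphs produces the claimed bound $\q_x(n,y) \ge C\,d(x,y)^{-(D-2)}$.

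The main obstacle is ensuring that the lower heat-kernel estimate \eqref{lower.bound.heat.kernel} is genuinely available throughout $W$: its validity requires the range condition relating $i$ and $d(x,y)$, which is where the second part $n \ge c'd(x,y)$ of the hypothesis is used, and for bipartite Cayley graphs the estimate only holds when $i$ matches the parity of $d(x,y)$. I would handle the latter by retaining only the $\asymp d(x,y)^2$ indices in $W$ of the correct parity, which leaves the order of the sum unchanged; the residual case of bounded $d(x,y)$ can be absorbed into the constant since $\q_x(n,y)$ is then bounded below for $n \ge d(x,y)$. Uniformity of $C$ in $x$ and $y$ is immediate from vertex-transitivity.
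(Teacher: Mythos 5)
Your proposal is correct and follows essentially the same route as the paper's proof: the first-passage (strong Markov) decomposition giving $\q_x(n,y) \geq \Green_n(x,y)/\Green(e,e)$, finiteness of the Green's function from the diagonal heat-kernel upper bound with $D \geq 3$, and a lower bound $\Green_n(x,y) \gtrsim d(x,y)^{2-D}$ obtained by summing \eqref{lower.bound.heat.kernel} over $\asymp d(x,y)^2$ diffusive time indices of the correct parity. The only cosmetic difference is that you sum over the window $\left[\tfrac{1}{2}d(x,y)^2, d(x,y)^2\right]$ directly, whereas the paper first reduces by monotonicity to $n=\left\lceil\max\{d(x,y)^2,c'd(x,y)\}\right\rceil$ and sums over $[n/4,n/2]$; your explicit treatment of the parity restriction and of bounded $d(x,y)$ spells out what the paper leaves implicit.
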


\begin{proof}
    We first observe that if $n' > n$, then $\q_x(n', y) \geq \q_x(n, y)$. Therefore we may consider w.l.o.g. that $n = \left\lceil\max \{d(x,y)^2, c'd(x,y)\}\right\rceil$.

    We follow an analogous procedure to the one adopted in Theorem 2.2 of \cite{alves2002}. We get from \eqref{Green.def} that
    \begin{eqnarray*}
    \Green_n(x,y) & \leq & \sum_{j=0}^n\sum_{k=0}^j\p_k(x,x)\mathbb{P} \big( t(x,y)=j-k \big) \\
    & = & \sum_{k=0}^n\p_k(x,x)\q_x(n-k,y) \leq \q_x(n,y)\Green_n(x,x).
    \end{eqnarray*}

    In particular, by Proposition \ref{heat.kernel.pn}, $\Green(x,x)$ converges. Hence $\q_x(n,y) \geq \dfrac{\Green_n(x,y)}{\Green(x,x)}$. Since
    \[
        \Green_n(x,y) \geq c_0 \sum_{j= \lfloor n/4 \rfloor}^{\lfloor n/2 \rfloor}(2j+1)^{-D/2} \geq c_0'n^{-D/2},
    \]
   the result follows from the fact that if $c' <1$ then $n=d(x,y)^2$, and $n \leq c'd(x,y)^2$ for $c' \geq 1$. \qedhere
\end{proof}

We denote by
\[
    \uptau^x_{r} := \inf \big\lbrace n \in \N : d(x,S_n^x) > r \big\rbrace
\]
 the \emph{first exit of $(S_n^x)_{n \in \N^*}$ from the ball $B(x,r)$} for which we present the following result.

\begin{prpstn} \label{prop.sup.rw.ball}
    Let $\mathcal{C}(\Gr,\Sgen)$ have polynomial growth rate $D \geq 3$. Then there exist constants $c_1,c_2 > 0$ such that, given $n \in \N^*$, $t \in \R^*_+$ and $x\in \Gr$,
    \[
        \mathbb{P}\left(\uptau^x_{t\sqrt{n}} \leq n \right) \leq c_1\exp\left(- c_2{t^2}\right).
    \]
\end{prpstn}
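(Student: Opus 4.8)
The plan is to deduce the maximal (exit-time) estimate from a single-time displacement estimate, the latter being an immediate consequence of the Gaussian upper bound of Proposition~\ref{heat.kernel.pn} combined with the polynomial volume growth $|B(e,k)| \le c\,k^D$. Since the increments of $(S^x_n)$ act by left multiplication while the word metric $d$ is right-invariant, the law of $d(x,S^x_j)$ does not depend on $x$; it therefore suffices to control $\mathbb{P}\big(d(e,S^e_j) > \rho\big)$, and by the strong Markov property the very same bound will be available after any stopping time.

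First I would establish the fixed-time bound. Writing $\sigma(k) := |\{y \in \Gr : d(e,y)=k\}|$ for the sphere sizes, the upper bound of Proposition~\ref{heat.kernel.pn} gives, for every $j \in \N^*$ and $\rho > 0$,
\[
    \mathbb{P}\big(d(e,S^e_j) > \rho\big) = \sum_{y : d(e,y) > \rho} \p_j(e,y) \le c\,j^{-D/2}\sum_{k > \rho} \sigma(k)\,e^{-k^2/(cj)}.
\]
Using $|B(e,k)| \le c\,k^D$ and summation by parts (comparing with the integral of $s \mapsto s^{D+1}e^{-s^2/(cj)}$), the sum is dominated by $C\,j^{D/2}\int_{\rho/\sqrt j}^{\infty} u^{D+1}e^{-u^2/c}\,du$, so that
\[
    \mathbb{P}\big(d(e,S^e_j) > \rho\big) \le C\int_{\rho/\sqrt j}^{\infty} u^{D+1}e^{-u^2/c}\,du \le C'\exp\!\big(-c_2\,(\rho/\sqrt j)^2\big).
\]
Taking $\rho = \tfrac12 t\sqrt n$ and noting $\rho/\sqrt j \ge \rho/\sqrt n = t/2$ for every $j \le n$, this yields a bound $\le C'e^{-c_2 t^2/4}$ that is uniform over all $j \le n$: shrinking the time only sharpens the concentration.

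The upgrade to the exit time $\tau := \uptau^x_{t\sqrt n}$ is the step I expect to be the main obstacle. With $r = t\sqrt n$, I would split
\[
    \mathbb{P}(\tau \le n) = \mathbb{P}\big(\tau \le n,\ d(x,S^x_n) > r/2\big) + \mathbb{P}\big(\tau \le n,\ d(x,S^x_n) \le r/2\big).
\]
The first term is at most $\mathbb{P}\big(d(x,S^x_n) > r/2\big) \le C'e^{-c_2 t^2/4}$ by the fixed-time bound. On the event in the second term we have $d(x,S^x_\tau) > r$ by the definition of $\tau$, so the triangle inequality forces $d(S^x_\tau, S^x_n) > r/2$; conditioning on $\tau = m$ and $S^x_m = z$ and invoking the strong Markov property reduces this contribution to $\mathbb{P}\big(d(z,S^z_{n-m}) > r/2\big)$, which by translation invariance and the uniform fixed-time bound is $\le C'e^{-c_2 t^2/4}$ for every $m \le n$. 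Summing over the disjoint events $\{\tau = m,\ S^x_m = z\}$ produces the self-bounding inequality
\[
    \mathbb{P}(\tau \le n) \le C'e^{-c_2 t^2/4}\big(1 + \mathbb{P}(\tau \le n)\big).
\]

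Finally I would close the argument. For $t$ large enough that $C'e^{-c_2 t^2/4} \le 1/2$, the displayed inequality rearranges to $\mathbb{P}(\tau \le n) \le 2C'e^{-c_2 t^2/4}$, which is of the required form; for the remaining bounded range of $t$ the trivial estimate $\mathbb{P}(\tau \le n) \le 1$ is absorbed by enlarging the constant $c_1$. The only technical points needing care are the summation-by-parts estimate that converts the volume growth and the Gaussian kernel into the tail integral, and the verification that the fixed-time bound holds uniformly over all $j \le n$, both of which are routine once the reduction to $x = e$ and the strong Markov decomposition are in place.
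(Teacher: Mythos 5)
Your proof is correct, but it takes a genuinely different route from the paper's. The paper disposes of this proposition in one line: it invokes Lemma 12.3 of \citet{alexopoulos2002} --- an exit-time estimate for random walks on groups of polynomial growth, of the form $\mathbb{P}\big(\uptau^x_r \leq n\big) \leq c'\exp\big(-\lfloor r\rfloor^2/(c'n)\big)$ --- and then merely massages constants, checking that $c'\exp\big(-\lfloor t\sqrt{n}\rfloor^2/(c'n)\big) \leq 3c'\exp\big(-t^2/(2c')\big)$. You instead re-derive such a maximal estimate from scratch: a fixed-time Gaussian tail obtained from the heat-kernel upper bound of Proposition \ref{heat.kernel.pn} together with the volume growth $|B(e,k)| \leq c\,k^D$, upgraded to the exit time by an Ottaviani/reflection-type argument (split on $d(x,S^x_n) \gtrless r/2$, decompose over the disjoint events $\{\tau = m,\ S^x_m = z\}$, and close the self-bounding inequality). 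Two details in your sketch deserve exactly the care you flag: the summation by parts is genuinely needed, since bounding the sphere sizes termwise by $c\,k^D$ and comparing directly with $\int s^D e^{-s^2/(cj)}\,ds$ leaves a spurious factor $\sqrt{j}$ that would ruin uniformity in $j \leq n$, whereas Abel summation trades it for the harmless $u^{D+1}$ in the tail integral; and the reduction to $x = e$ uses that the word metric $d$ is right-invariant while the increments multiply on the left, which matches the paper's definition $S^x_n = \xi_n\cdots\xi_1 x$. Note also that the boundary case $\tau = n$ in your second term is vacuous (there $d(x,S^x_\tau) > r$ contradicts $d(x,S^x_n) \leq r/2$), so the Markov step never needs $j = 0$; and your final case split on $t$ is avoidable, since plugging the trivial bound $\mathbb{P}(\tau \leq n) \leq 1$ into the right-hand side of the self-bounding inequality yields $\mathbb{P}(\tau \leq n) \leq 2C'e^{-c_2 t^2/4}$ at once. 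In sum: the paper's proof is shorter but leans on an external lemma as a black box, while yours is self-contained modulo results already quoted in the paper, at the cost of a page of standard but valid estimates.
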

\begin{proof}
    It suffices to apply Lemma 12.3 from \citet{alexopoulos2002} and verify that there exists $c'>0$ such that
    \[
        \mathbb{P}\left(\uptau^x_{t\sqrt{n}} \leq n \right) \leq c'\exp\left(- \frac{\lfloor t\sqrt{n}\rfloor^2}{c'n}\right) \leq 3c'\exp\left(- \frac{t^2}{2c'}\right).
    \]
\end{proof}

We denote by $\mathsf{R}_n^x = \left\lbrace S_i^x : i \in\{0,1, \dots,n\}\right\rbrace$ the random set of \emph{distinct visited sites} of the random walk starting from $x$ up to time $n$ for which we state the following lemma.

\begin{lmm}[{\citet[p.~R59]{burioni2005}}] \label{lm.avrg.distinc.sites}
    Let $\mathcal{C}(\Gr,\Sgen)$ have no loops and polynomial growth rate $D\geq 3$. Then there exists $a_0 >0$ such that
    \begin{equation*}
        \lim_{n \to \infty}\frac{\mathbb{E}\big[|\mathsf{R}_n^x|\big]}{n}=a_0.
    \end{equation*}
\end{lmm}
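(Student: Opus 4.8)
The plan is to realize $|\mathsf{R}_n^x|$ as a count of \emph{first visits} and to exploit left-translation invariance together with the reversibility of the walk on the Cayley graph, thereby reducing the limit to the escape probability of the walk, which is strictly positive precisely because $D \geq 3$ forces transience. First I would write
\[
    |\mathsf{R}_n^x| = \sum_{k=0}^{n} \mathbf{1}\big\{ S_k^x \notin \{S_0^x, S_1^x, \dots, S_{k-1}^x\} \big\},
\]
since every distinct site of the range is counted exactly once, namely at the step where it is first reached. Taking expectations gives $\mathbb{E}\big[|\mathsf{R}_n^x|\big] = \sum_{k=0}^{n} F_k^x$, where $F_k^x := \mathbb{P}\big( S_k^x \neq S_j^x \text{ for all } 0 \le j < k \big)$.

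The key step is to identify $F_k^x$ with a no-return probability independent of $x$. Writing $S_k^x = \xi_k \cdots \xi_1 x$, the event defining $F_k^x$ is $\{ \xi_k \cdots \xi_{j+1} \neq e \text{ for all } 0 \le j < k \}$, which does not depend on $x$; moreover, since the increments $\xi_1, \dots, \xi_k$ are i.i.d.\ uniform on $\Sgen$ and $\Sgen$ is symmetric, the bijection $s \mapsto s^{-1}$ preserves their common law, so reversing the order of the increments yields
\[
    F_k^x = \mathbb{P}\big( S_m^e \neq e \text{ for all } 1 \le m \le k \big) =: F_k,
\]
the probability that a fresh walk started at $e$ avoids $e$ throughout its first $k$ steps. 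I would then observe that $(F_k)_k$ is nonincreasing and, by continuity of measure, converges to the escape probability $F_\infty := \mathbb{P}\big( S_m^e \neq e \text{ for all } m \geq 1 \big)$. A first-return (renewal) decomposition gives $\Green(e,e) = \sum_{k \geq 0} \p_k(e,e) = 1/(1 - q)$, where $q$ is the return probability, so that $F_\infty = 1 - q = 1/\Green(e,e)$; by the upper heat-kernel bound in Proposition \ref{heat.kernel.pn} we have $\Green(e,e) < \infty$ for $D \geq 3$, whence $F_\infty > 0$. Finally, by Cesàro convergence of $(F_k)$,
\[
    \frac{\mathbb{E}\big[|\mathsf{R}_n^x|\big]}{n} = \frac{n+1}{n}\cdot\frac{1}{n+1}\sum_{k=0}^{n} F_k \xrightarrow[n\to\infty]{} F_\infty,
\]
so the claim holds with $a_0 = F_\infty = 1/\Green(e,e) > 0$.

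I expect the main obstacle to be the reversibility identity in the second paragraph: one must verify carefully that the time-reversal argument is valid on a general Cayley graph, invoking both the symmetry $\Sgen = \Sgen^{-1}$ (so that $s \mapsto s^{-1}$ is a measure-preserving bijection of $\Sgen$) and left-translation invariance (so that $F_k^x$ is genuinely independent of $x$, which also explains why the limit $a_0$ is a single constant). The hypothesis $e \notin \Sgen$ ensures that every step is a true displacement, so that the parity restrictions appearing in Proposition \ref{heat.kernel.pn} for bipartite graphs are the only obstruction to returns; this affects neither the finiteness of $\Green(e,e)$ nor the positivity of $F_\infty$.
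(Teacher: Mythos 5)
Your proposal is correct, and it is worth noting that the paper itself gives no proof of this lemma at all: it is imported as a cited result from the physics survey of Burioni and Cassi, so you have supplied an argument where the paper supplies only a reference. What you wrote is the classical Dvoretzky--Erd\H{o}s first-visit decomposition, correctly transplanted to a general Cayley graph: the identity $\mathbb{E}\big[|\mathsf{R}_n^x|\big]=\sum_{k=0}^n F_k$ with $F_k$ the no-return probability is exactly right, and you handle the one genuinely delicate step in the nonabelian setting, namely that $S_k^x\neq S_j^x$ reads $\xi_k\cdots\xi_{j+1}\neq e$ (a condition on suffix products, not on the walk's own prefix products), via the combination of index reversal and the inversion $s\mapsto s^{-1}$, both measure-preserving because the increments are i.i.d.\ uniform on the symmetric set $\Sgen$. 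The renewal identity $\Green(e,e)=1/(1-q)$ and the on-diagonal upper bound $\p_k(e,e)\leq c\,k^{-D/2}$ from Proposition \ref{heat.kernel.pn} (only the upper bound is needed, so the bipartite parity caveat is indeed immaterial) give transience for $D\geq 3$, and Ces\`aro convergence finishes the proof. Your route even buys more than the cited statement: it identifies the constant explicitly as $a_0=1/\Green(e,e)$, shows it is independent of $x$ by translation invariance, and makes the lemma self-contained within the paper's own toolkit rather than resting on an external reference. Two cosmetic remarks: the ``no loops'' hypothesis ($e\notin\Sgen$) is not actually used anywhere in your argument --- the decomposition and the renewal identity are insensitive to lazy steps, with transience alone driving $F_\infty>0$; and for full rigor you may want to state the monotone-events step as $F_k\downarrow F_\infty$ by continuity of $\mathbb{P}$ along the decreasing intersection $\bigcap_{m\geq 1}\{S_m^e\neq e\}$, which is exactly what licenses the Ces\`aro limit.
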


\begin{rmrk}
    Concerning Lemma \ref{lm.avrg.distinc.sites} stated above, we observe that the case where the graph has polynomial growth rate $D = 2$ is rather particular. For instance, random walks on Cayley graphs with polynomial growth rate 2 are recurrent (see \cite{woess2000}, \S3.B).  In particular, if the graph is the $\Z^2$ lattice, then it can be proved that $\mathbb{E}\big[|\mathsf{R}_n^x|\big]\log(n)/n$ converges.
\end{rmrk}

We can now proceed with the study of the activation times. We begin by proving the following result.

\begin{prpstn} \label{prop.T.finite.as}
    Let $\mathcal{C}(\Gr,\Sgen)$ have no loops and polynomial growth rate $D \geq 3$. Then there exists a constant $\beta >0$ such that, for given $x, x_0 \in \Gr$, there exists $C= C(x_0x^{-1}) >0$ satisfying
    \[\mathbb{P}\big(T(x,x_0)\geq n\big) \leq C \exp{\left(-n^\beta\right)}.\]
\end{prpstn}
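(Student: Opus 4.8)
The plan is to reduce the estimate to the activation of a single neighbouring site and then to exploit the proliferation of frogs. Since concatenating a chain from $x$ to $z$ with a chain from $z$ to $y$ produces a chain from $x$ to $y$, the definition of $T$ gives the subadditivity $T(x,y)\le T(x,z)+T(z,y)$. Fixing a geodesic $x=z_0,z_1,\dots,z_R=x_0$ with $R=d(x,x_0)=\|x_0x^{-1}\|_1$, this yields $T(x,x_0)\le\sum_{i=1}^{R}T(z_{i-1},z_i)$, so by a union bound $\mathbb{P}(T(x,x_0)\ge n)\le R\max_{s\in\Sgen}\mathbb{P}(T(e,s)\ge n/R)$, where I use that the law of the model is invariant under right translations and that $z_iz_{i-1}^{-1}\in\Sgen$. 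Note $R$ depends only on $x_0x^{-1}$. It therefore suffices to bound, for a fixed generator $s$, the probability that the neighbour $s$ of $e$ is not activated by time $m:=\lceil n/R\rceil$; shrinking $\beta$ and enlarging the constant then absorbs the factor $R$ and the replacement of $n$ by $n/R$.

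For the neighbour estimate I would first replace the geometric ``capture'' of $s$ by a Bernoulli count. Write $N(s)$ for the (finite) set of neighbours of $s$. Whenever an \emph{active} frog sits at some $w\in N(s)$ at a time $\tau<m$, its next step lands on $s$ with probability $1/|\Sgen|$, activating $s$; and for distinct frogs these ``entrance-and-step'' events are independent, since they depend on disjoint families of walks. Hence, if $M$ denotes the number of distinct frogs whose trajectory enters $N(s)$ while active before time $m$, then $\mathbb{P}(T(e,s)\ge m)\le\mathbb{P}(M<M_0)+(1-1/|\Sgen|)^{M_0}$ for any threshold $M_0$. The task is thus to show that $M$ is large, say $M\ge M_0(m)\to\infty$, with a failure probability that decays like a stretched exponential.

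The core is therefore a quantitative lower bound on the spread of the model, established without recourse to the shape theorem itself. Here I would argue in time blocks. In a first block of length of order $m$ the single walk started at $e$ already activates, by Lemma~\ref{lm.avrg.distinc.sites} together with a concentration estimate for the range, at least $\theta m$ frogs, all confined to a ball of radius $O(\sqrt{m})$ by the exit bound of Proposition~\ref{prop.sup.rw.ball}; these form a seed cluster. Feeding the hitting lower bound of Proposition~\ref{prop.qx.lim.inf} into successive blocks --- in which a positive fraction of the currently active frogs advance to, and fill in, the next spatial scale --- one bootstraps this seed into a cluster that occupies, with overwhelming probability, a ball of radius growing linearly in time with positive density. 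On the event that such a cluster of radius $\asymp m$ and positive density has formed by time $m/2$, the expected number of its frogs whose remaining walk visits the fixed neighbourhood $N(s)$ is, by Proposition~\ref{prop.qx.lim.inf}, of order $\sum_{\rho\le cm}\rho^{D-1}\rho^{-(D-2)}\asymp m^{2}$; a second-moment or independence argument then gives $M\gtrsim m^{2}$ with high probability, which combined with the previous paragraph yields $\mathbb{P}(T(e,s)\ge m)\le C'\exp(-m^{\beta})$ and hence the proposition.

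The main obstacle is exactly this growth-with-positive-density estimate. Because the rate $D\ge 3$ makes the walks transient, no single frog can be relied upon to relay the activation across even one scale --- with positive probability it wanders off and never returns --- so the argument must at every step replace one frog by the many independent frogs produced in the preceding block. Controlling this multiscale proliferation quantitatively (the confinement from Proposition~\ref{prop.sup.rw.ball}, the per-scale advance probability from Proposition~\ref{prop.qx.lim.inf}, and the concentration of the range and of the frog counts) is what forces the bound to be a stretched exponential $\exp(-n^{\beta})$ rather than a genuine exponential, and it is the technical heart of the proof.
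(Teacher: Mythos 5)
Your outer scaffolding is sound: the reduction via subadditivity along a geodesic to a single generator (using translation invariance, with $R=\|x_0x^{-1}\|_1$ absorbed into $C(x_0x^{-1})$ and a shrunken $\beta$) is legitimate, and the Bernoulli ``capture'' bound $\mathbb{P}(T(e,s)\geq m)\leq \mathbb{P}(M<M_0)+(1-1/|\Sgen|)^{M_0}$ is essentially the same independence step the paper uses in its display \eqref{prob.H}, where each awake frog is given an independent chance to hit the target via Proposition \ref{prop.qx.lim.inf}. But the core of your argument --- ``one bootstraps this seed into a cluster that occupies, with overwhelming probability, a ball of radius growing linearly in time with positive density'' by time $m/2$ --- is asserted, not proved, and it is far stronger than anything available at this stage. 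Positive density at linear scale with stretched-exponentially small failure probability is essentially the shape theorem itself, which is proved \emph{from} this proposition via the subadditive ergodic machinery; you cannot presuppose it. The paper's actual mechanism (Claims \ref{Af.1} and \ref{Af.2}, following Lemma 3.3 of Alves--Machado--Popov) is deliberately much weaker: $\lfloor D/2\rfloor$ generations confined to the near-diffusive scale $n^{1/2+\upepsilon}$, frog counts growing only polynomially ($|\Tilde{G}_k|\geq r_k n^k$, a vanishing density relative to the ball volume $\asymp n^{(1/2+\upepsilon)D}$), and total elapsed time $O(n^{1+2\upepsilon})$ --- superlinear, with the loss absorbed by adjusting $\beta$. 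This suffices because $\lfloor D/2\rfloor>(1/2+\upepsilon)(D-2)$ for small $\upepsilon$, so the polynomially many frogs beat the polynomially small hitting probability; no positive-density or linear-speed statement is ever needed.

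Two of your quantitative claims are moreover incorrect as stated. First, the count $M\asymp\sum_{\rho\le cm}\rho^{D-1}\rho^{-(D-2)}\asymp m^2$ ignores the time--distance constraint: the lower bound $\q_x(n,y)\geq C/d(x,y)^{D-2}$ of Proposition \ref{prop.qx.lim.inf} requires $n\geq\max\{d(x,y)^2,c'd(x,y)\}$, and by the heat-kernel upper bound a frog at distance $\rho\gg\sqrt{m}$ has only an exponentially small chance of reaching $N(s)$ in the $O(m)$ time remaining. The sum must be truncated at $\rho\lesssim\sqrt{m}$, giving at best $M\asymp m$ --- still enough in principle, but only on an event of positive density at the \emph{diffusive} scale ($\asymp m^{D/2}$ active frogs in $B(e,\sqrt{m})$), which is again the unproven core. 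Second, your seed step claims ``at least $\theta m$ frogs'' from Lemma \ref{lm.avrg.distinc.sites} ``together with a concentration estimate for the range''; but that lemma is only a first-moment statement, and no stretched-exponential lower-deviation bound for the range at linear order is cited or proved. The paper avoids needing one: via Claim \ref{lm.desig.X} it gets a success probability $C_1>0$ per block, splits $[0,n]$ into $n^{\upepsilon}$ independent blocks of length $n^{1-\upepsilon}$, and settles for $r_1n^{1-\upepsilon}$ distinct sites (one successful block suffices) with failure probability $(1-C_1)^{n^{\upepsilon}}$, combined with the confinement bound of Proposition \ref{prop.sup.rw.ball}. So while your proposal correctly identifies the proliferation-plus-hitting structure and even flags the growth estimate as the technical heart, that heart is missing, and the surrounding quantitative scheme (linear scale, positive density, $m^2$ frogs, time $m$) is not the one that can be made to work; the workable version is the paper's diffusive-scale, polynomial-count, superlinear-time hierarchy.
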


\begin{proof}
    The proof consists in following the ideas introduced in Theorem 3.2 of \cite{alves2002}. For the convenience of the reader, we repeat the reasoning with the corresponding adjustments.
    
    Let $n \in \N$ be such that $n \geq  \lceil \max\{ d(x,x_0)^2, c' d(x,x_0) \} \rceil$ where $c'>0$ is given by (\ref{lower.bound.heat.kernel}). Set
    \[
        \mathsf{D}_{i,\upepsilon} := \left\{ y \in \Gr : \|yx^{-1}\|_1 \leq i n^{1/2 + \upepsilon} \right\}
    \]
    with $i \in \{1, \dots, \lfloor D/2 \rfloor\}$ and $\upepsilon \in {}]0,1[$ to be defined later. Define the event
    \[
        A_1 = A_1(n,\upepsilon) := \left\{\left| \mathsf{R}_n^e \cap \mathsf{D}_{1,\upepsilon} \right| \geq r_1 n^{1-\upepsilon}\right\},
    \]
    where $r_1 > 0$ is a constant depending on $D$ which will be chosen later. We continue below with some auxiliary results.
    
    \begin{clm} \label{lm.desig.X}
        Let $X$ be an integer-valued random variable such that $0 \leq X \leq \mathsf{a} ~a.s.$ and $\mathbb{E}[X] \geq \mathsf{b}$ with $\mathsf{b} >0$. Then
        \[
            \mathbb{P}\left(X \geq \frac{\mathsf{b}}{2}\right) \geq \frac{\mathsf{b}}{2\mathsf{a}}.
        \]
    \end{clm}
    \begin{proof} \renewcommand\qedsymbol{$\blacksquare$}
        An easy computation shows that
        \[
            \mathbb{E}[X] = \sum_{j = 1}^{\lfloor \mathsf{b}/2 \rfloor} i \mathbb{P}(X=1) +  \sum_{j \geq \lfloor \mathsf{b}/2 \rfloor +1}^{\lfloor \mathsf{a} \rfloor} i \mathbb{P}(X=1) \leq \frac{\mathsf{b}}{2} + \mathsf{a}\mathbb{P}\left(X \geq \frac{\mathsf{b}}{2} \right).
        \]
        We get to the desired conclusion observing that $\mathbb{E}[X] \geq \mathsf{b}$.
    \end{proof}
    
    \begin{clm} \label{Af.1}
        Let $D \geq 3$. Then one can choose $r_1 >0$ such that there exist $\alpha_1,\alpha_1' >0$ satisfying
        \[
            \mathbb{P}(A_1) \geq 1 - \alpha_1 \exp(-\alpha_1' n^\upepsilon) \quad \text{for all}n \in \N^*
        \]
    \end{clm}
    \begin{proof}\renewcommand\qedsymbol{$\blacksquare$}
        By Lemma \ref{lm.avrg.distinc.sites} and Claim \ref{lm.desig.X}, there exist $r_1, C_1 >0$ such that
        \begin{equation} \label{eq.range}
            \mathbb{P}\left(\left|\mathsf{R}_k^x\right| \geq r_1 k\right) \geq C_1.
        \end{equation}
        Fix
        \[
            A_1' = A_1'(n, \upepsilon) := \left\{ \left| \mathsf{R}_n^x\right| \geq r_1 n^{1-\upepsilon} \right\}.
        \]
        Consider a partition of $[0,n]$ into disjoint intervals of length $n^{1-\upepsilon}$. The cardinality of $|\mathsf{R}_{k}^{x}|$  associated with each subinterval does no depend on the cardinalities of the other subranges. We thus apply (\ref{eq.range}) with $k = n^{1-\upepsilon}$ obtaining
        \begin{equation} \label{eq.A1linha.l11}
            \mathbb{P}(A_1') \geq 1 - (1-C_1)^{n^\upepsilon}.
        \end{equation}
        It follows from Proposition \ref{prop.sup.rw.ball} that there exist $c_1,c_2 >0$ such that
        \begin{equation} \label{eq.B.l11}
            \mathbb{P}\left(\uptau_{n^{1/2 + \upepsilon}}^x \leq n\right) \leq c_1 \exp\left( -c_2 n^\upepsilon \right)  
        \end{equation}
        We verify the claim combining (\ref{eq.A1linha.l11}) and (\ref{eq.B.l11}).
    \end{proof}
    Set $n_k>0$ to be given by
    \[
    n_k = n + \left(\sum_{j=2}^k (2j+1)^2\right) n^{1+2\upepsilon}
    \]
    Define the following random sets
    \begin{equation*}
    \Tilde{G}_1 = \{ y \in \mathsf{D}_{1,\upepsilon}: t(x,y) \leq n_1\},
    \end{equation*}
    and
    \begin{equation*}
    \Tilde{G}_k = \Big\{ y \in \mathsf{D}_{k,\upepsilon}\setminus\mathsf{D}_{k-1,\upepsilon}: \exists z \in \Tilde{G}_{k-1} \Big( t(z,y) \leq n_k - n_{k-1} \Big) \Big\}
    \end{equation*}
    where $k \in \{2, \dots, \lfloor D/2\rfloor\}$. Let us write, for $k \in \{2, \dots, \lfloor D/2\rfloor\}$,
    \[
        A_k = A_k(n,\upepsilon) := \left\{ \left| \Tilde{G}_k \right| \geq r_k n^k  \right\}
    \]
    where each $r_k$ will be chosen later. Now, set
    \[
        \upepsilon(k) = \left\lbrace\begin{array}{cc}
            \upepsilon/2, & \mbox{ if } k=1, \\
            \upepsilon, & \mbox{ if } k>1.
        \end{array} \right.
    \]
    We sate without proof the claim below, which follows in the same lines of Lemma 3.3 of \cite{alves2002} and uses Proposition \ref{prop.qx.lim.inf} and Claim \ref{Af.1}.
    \begin{clm} \label{Af.2}
        Let $D \geq 4$. Then one can choose $r_i >0$ for all $i \in \{2, \dots, \lfloor D/2 \rfloor\}$ such that, for every $k \in \{1, 2, \dots, \lfloor D/2 \rfloor\}$, there exist $\alpha_k,\alpha_k' >0$ satisfying
        \begin{equation*} \label{eq.af.ad1}
            \mathbb{P}(A_{k+1}|A_{k}) \geq 1 - \alpha_k \exp\left(-\alpha_k'n^{2\upepsilon(k)}\right)
        \end{equation*}
        for all $n \in \N^*$. Moreover, if $D \geq 3$, then there exist $\hat{\alpha}_0,\hat{\alpha}_1, \gamma_1>0$ such that
        \begin{equation} \label{eq.af.ad2}
            \mathbb{P}\left(A_{\lfloor D/2 \rfloor}\right) \geq 1 - \hat{\alpha}_0 \exp \left( \hat{\alpha}_1 n^{\gamma_1} \right)
        \end{equation}
        for all $n \in \N^*$.
        \hfill $\blacksquare$
    \end{clm}
    
    We are now in a position to show the case $D \geq 4$. Let us define
    \begin{equation} \label{H}
        H :=\left\{\forall y \in\Tilde{G}_{\lfloor D/2 \rfloor}\left( T(y,x_0)> n_{\lfloor D/2 \rfloor} + (\lfloor D/2 \rfloor +1)^2n^{1+2\upepsilon}\right)\right\}.
    \end{equation}
    Note that when $\Tilde{G}_{\lfloor D/2 \rfloor}$ is conditioned to $A_{\lfloor D/2 \rfloor}$ one has that $|\Tilde{G}_{\lfloor D/2 \rfloor}| \geq r_{\lfloor D/2 \rfloor}n^{\lfloor D/2 \rfloor}$ and $d(x_0, y) \leq \big({\lfloor D/2 \rfloor}+1\big) n^{1/2 + \upepsilon}$ for all $y \in \Tilde{G}_{\lfloor D/2 \rfloor}$. By the independence of the random walks and by Proposition \ref{prop.qx.lim.inf}, there exists $C'> 0$ such that
    \begin{align}
        \mathbb{P}\Big( T(x,x_0) > n_{\lfloor D/2 \rfloor} + ({\lfloor D/2 \rfloor} + 1)^2n^{1+ 2\upepsilon} | A_{\lfloor D/2 \rfloor}\big) &\leq \mathbb{P}(H | A_{\lfloor D/2 \rfloor}\Big) \label{prob.H}\\
        &\leq \left( 1 - \dfrac{C'}{n^{(1/2 + \upepsilon)(D-2)}}\right)^{r_{\lfloor D/2 \rfloor}n^{\lfloor D/2 \rfloor}}. \nonumber
    \end{align}
    Let us now choose $\upepsilon < \frac{1}{2(D-2)}$. Since
    \begin{align*}
        \mathbb{P}\Big( T(x,x_0) > n_{\lfloor D/2 \rfloor} + &({\lfloor D/2 \rfloor} + 1)^2n^{1+ 2\upepsilon}\Big) \\ &\leq \mathbb{P}\left( T(x,x_0) > n_{\lfloor D/2 \rfloor} + ({\lfloor D/2 \rfloor} + 1)^2n^{1+ 2\upepsilon} | A_{\lfloor D/2 \rfloor}\right) + \mathbb{P}\left(A_{\lfloor D/2 \rfloor}^c\right)
    \end{align*}
    we get the result by combining (\ref{eq.af.ad2}) and \eqref{prob.H}.
    
    We now turn to the case $D=3$. Define $H$ as \eqref{H} and we draw the same conclusion as above for the existence of $C''>0$ such that
    \[
        \mathbb{P}(H | A_1 ) \leq \left( 1 -\frac{C''}{n^{1/2+\upepsilon}} \right)^{r_1n^{1-\upepsilon}}.
    \]
    We conclude the proof by applying Claim \ref{Af.1} with $\upepsilon < 1/4$.
\end{proof}

The remainder of this section will be devoted to state and prove that $T(e,\cdot)$ grows at least linearly. The following contruction is adapted from \cite{alves2002}.

\begin{prpstn} \label{prop.T.div.dist}
    Let $\mathcal{C}(\Gr,\Sgen)$ have no loops and polynomial growth rate $D \geq 3$. Then there exist $C>0$,  $\kappa>0$ and $\upalpha >1$ such that
    \[
        \mathbb{P}\left(T(e,x)\geq \upalpha n\right) \leq C \exp{\left(-n^\kappa\right)}
    \]
    for all $x \in \Gr$ and every $n \in \N$ such that $n \geq \|x\|_1$.
\end{prpstn}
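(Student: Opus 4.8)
The plan is to reduce the statement to a concentration bound for a sum of $O(1)$-sized crossing times and then supply the missing independence by a renormalization/relay scheme. Fix $x$ with $L := \|x\|_1 \le n$ and choose a geodesic $e = z_0, z_1, \dots, z_L = x$ in $\mathcal{C}(\Gr,\Sgen)$, so that $d(z_{i-1},z_i)=1$ for every $i$. From the definition of $T$ as an infimum of path-costs one gets the \emph{pathwise} subadditivity $T(e,x) \le \sum_{i=1}^{L} T(z_{i-1},z_i)$, since concatenating near-optimal routes for the segments produces an admissible route from $e$ to $x$. By right-invariance of the model (right multiplication acts by graph automorphisms and $S^{xg}_n = S^x_n g$), each summand satisfies $T(z_{i-1},z_i) \stackrel{d}{=} T(e,s_i)$ with $s_i := z_i z_{i-1}^{-1}\in\Sgen$, so Proposition \ref{prop.T.finite.as} gives a uniform tail $\mathbb{P}(T(z_{i-1},z_i) \ge m) \le C_0 \exp(-m^\beta)$ with $C_0 := \max_{s\in\Sgen} C(s)$, and hence a finite mean $\mu := \sum_{m\ge 0} C_0 \exp(-m^\beta)$. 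Keeping the segments of bounded length is essential: Proposition \ref{prop.T.finite.as} is informative only for $m \gtrsim d^2$, so a block of size $b$ can be guaranteed to be crossed in linear time $\sim b$ only when $b = O(1)$; this forces the number of segments to be of order $L$.

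The difficulty is that the variables $T(z_{i-1},z_i)$ are \emph{not} independent — each is a functional of the entire family $\{(S^x_n)_n : x\in\Gr\}$ — and a union bound over $\sim n$ segments, each failing with a fixed probability, does not decay. The core of the argument is therefore to replace the summands by genuinely independent ones via the strong Markov property at successive activation times. Let $\sigma_0 = 0$ and, having reached $z_{i-1}$ at the stopping time $\sigma_{i-1}$, attempt to activate $z_i$ on the window $[\sigma_{i-1},\sigma_{i-1}+\Lambda]$ with $\Lambda=O(1)$, using the local (constant-distance, constant-time) instance of the cloud-growth estimate behind Proposition \ref{prop.T.finite.as}: this succeeds with probability at least a constant $p_0>0$, using only the walk increments inside that window. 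If it fails, restart on $[\sigma_{i-1}+\Lambda,\sigma_{i-1}+2\Lambda]$, and so on. Because successive windows occupy disjoint time intervals and we restart at the $\sigma_{i-1}$, the strong Markov property makes the number $G_i$ of windows needed for segment $i$ stochastically dominated by a Geometric$(p_0)$ variable and renders $G_1,G_2,\dots$ \emph{independent}, yielding $T(e,x) \le \Lambda \sum_{i=1}^{L} G_i$ with the $G_i$ independent and sub-exponential.

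With independence in hand the conclusion is a routine Chernoff estimate. Put $V_i := \Lambda G_i$, so $\mathbb{E}[V_i] \le \Lambda/p_0 =: \mu_0$ and the $V_i$ are independent with a uniform exponential tail. Choosing $\upalpha > \max\{\mu_0,1\}$, for $n \ge L$ the threshold $\upalpha n$ exceeds $\mathbb{E}\big[\sum_{i=1}^L V_i\big] \le \mu_0 L \le \mu_0 n$ by a constant factor uniformly in $L$, so for a small fixed $\theta>0$,
\[
\mathbb{P}\!\left(\sum_{i=1}^{L} V_i \ge \upalpha n\right) \le e^{-\theta\upalpha n}\,\big(\mathbb{E}[e^{\theta V_1}]\big)^{L} \le \exp\!\big(-\theta(\upalpha-\mu_0)\,n + O(n\theta^2)\big) \le C\exp(-cn),
\]
which gives the claim with $\kappa=1$; any $\kappa\le 1$ is acceptable. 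Alternatively, avoiding the geometric relay and decoupling instead independent \emph{localized} copies of the segment-crossing (with the tail of Proposition \ref{prop.T.finite.as}) produces $C\exp(-n^{\kappa})$ with $\kappa$ governed by $\beta$. This is the analogue, on $\mathcal{C}(\Gr,\Sgen)$, of the linear-growth estimate in \cite{alves2002}.

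I expect the main obstacle to be exactly the independence step. Making the relay precise requires specifying the local crossing event so that (i) it depends only on the increments inside its time window, (ii) it succeeds with probability bounded below uniformly in $i$ and in the past, and (iii) the restart at the random times $\sigma_{i-1}$ is legitimately governed by the strong Markov property even though particles activated in earlier segments keep moving through the region. The delicate point is securing the uniform lower bound $p_0$ on the per-window crossing probability while guaranteeing that, at the start of each window, enough activated mass is present near $z_{i-1}$ to seed the local cloud; once this is established, subadditivity and the Chernoff bound are straightforward.
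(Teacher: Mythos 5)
Your opening moves coincide with the paper's: decompose along a $d$-geodesic, use subadditivity to get $T(e,x)\le\sum_{i=1}^{L}T(z_{i-1},z_i)$, invoke translation invariance so each summand has the tail of Proposition \ref{prop.T.finite.as} uniformly over $s\in\Sgen$, and correctly identify that the whole difficulty is the lack of independence of the summands. But your resolution of that difficulty --- the $O(1)$-window relay with geometrically dominated retry counts $G_i$ and a strong Markov restart --- has a genuine gap, one your own closing paragraph concedes without closing. Concretely: (i) after a failed window there is nothing to ``restart'' from. The only frog guaranteed fresh at time $\sigma_{i-1}$ is the one originally at $z_{i-1}$; once its first steps miss $z_i$ it has left $z_{i-1}$, and any further attempt must be seeded by other frogs whose identities, positions and remaining increments depend on the entire past of the process, so no per-window success probability $p_0$ uniform in $i$ \emph{and in the past} is available. (ii) Attempts for segment $i$ can activate frogs sitting at or near $z_j$ for $j>i$; when segment $j$ begins, those frogs are no longer at their home vertices and their increments are partially consumed, which destroys both the uniform lower bound and the independence of $G_j$ from $(G_1,\dots,G_{j-1})$. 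The strong Markov property of each individual walk resets time along one trajectory; it cannot restore the spatial configuration of the cloud that your seeding needs. Since (i)--(iii) of your own checklist are exactly what is unproved, the Chernoff step has no hypothesis to act on. The claimed $\kappa=1$ should itself have been a warning sign: nothing in the paper's toolkit produces per-edge crossings in $O(1)$ time with uniformly positive probability \emph{and} independence, and the paper only obtains a stretched exponential.

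It is instructive to see how the paper buys independence instead, because its mechanism sidesteps precisely your obstacle. It conditions on the event $B=\{Y_i\le \sqrt{n}/2,\ i=1,\dots,n\}$, whose complement is controlled by Proposition \ref{prop.T.finite.as} plus a union bound, giving \eqref{eq.lm4.1}. On $B$, each $Y_i$ is a functional only of the walks of frogs within distance $\sqrt{n}/2$ of its segment, so the decimated sums $\sigma_i=\sum_{j=0}^{M_i}Y_{i+j\lceil\sqrt{n}\rceil}$ involve variables supported on disjoint families of frogs and are conditionally independent: independence is obtained by spacing the summands $\lceil\sqrt{n}\rceil$ apart \emph{in space}, not by disjoint time windows. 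The price is that the conditioned summands are heavy-tailed, handled by a Fuk-type inequality (Claim \ref{Af.lm4}), and that each $\sigma_i$ has only $M_i=O(\sqrt{n})$ terms, which is why the final decay is $\exp(-n^{\kappa})$ with $\kappa<1$ rather than exponential; the case $\|x\|_1<n$ is then reduced to $\|x\|_1=n$ through an auxiliary point $y$ with $\|y\|_1=n$ and $\|xy^{-1}\|_1\ge n$. If you want to salvage your relay, you would have to replace the $O(1)$ windows by blocks of a scale large enough that a block can be crossed using only frogs interior to it (as in the paper's $\sqrt{n}$-scale localization), at which point you have essentially rediscovered the paper's argument and, with it, the stretched-exponential rate.
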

\begin{proof}
    The proof will be divided into two parts. We first consider the case $\|x\|_1=n$. Let $p \in \mathscr{P}(e,x)$ be a $d$-geodesic in $\mathcal{C}(\Gr,\Sgen)$. We fix $p=(e = x_0, x_1, \dots, x_n = x)$. Note that $\|x_k\|_1 = k$ for all $k \in \{0, 1, \dots, n\}$. Set $Y_i := T(x_{i-1},x_i)$. Since $T(\cdot,\cdot)$ is subadditive, it suffices to verify the existence of $\upalpha>1$ and $\kappa>0$ satisfying
    \begin{equation*}
        \mathbb{P}\left( \sum_{i=1}^n Y_i \geq \upalpha{n} \right) \leq C \exp(-n^\kappa).
    \end{equation*}
    Set
    \[
        B := \left\{ Y_i \leq \frac{\sqrt{n}}{2} : i \in \{1, 2, \dots, n\}\right\}.
    \]
    It follows from Proposition \ref{prop.T.finite.as} that there exists $C_s$ depending on each $s \in S$ such that 
    \[
        \mathbb{P}(T(e,s) \geq t) \leq C_s\exp(-t^\beta),
    \]
    for all $t>0$. Let $C_S:= \max\{C_s: s \in S\}$. Then there exists $\kappa' >0$ such that
    \begin{equation} \label{eq.lm4.1}
        \mathbb{P}(B^c) \leq C_S n\exp(-n^{\kappa'}).
    \end{equation}
    Let us define
    \[
        \sigma_i := \sum_{j=0}^{M_i} Y_{i+j\lceil \sqrt{n} \rceil}
    \]
    for $i \in \{ 1, 2, \dots, \lceil \sqrt{n} \rceil \}$ where $M_i := \max\{l \in \N : i+l\lceil\sqrt{n}\rceil \leq n\}$.
    
    Note that $\left\{ Y_{i+ j\lceil\sqrt{n}\rceil} : j \in \{1, \dots, M_i  \} \right\}$ is a set of independent random variables when it is conditioned to $B$. Thus $\sigma_i$ is a sum of independent random variables when the event $B$ occurs. Hence the following claim is an immediate consequence of Theorem 1 of \citet{fuk1971}.
    
    \begin{clm}\label{Af.lm4}
        Let $\sigma_i$ be defined as above. Then, for all $\lambda>0$, one has that
        \[
            \mathbb{P}\left( \left.\sigma_i \geq \lambda{M_i}\right| B \right) \leq \exp\left(2\sqrt{\lambda{M_i}}\left(1-\log\left(\frac{\beta\lambda}{C_S~\Upgamma(1/\beta)}+1\right)\right) \right),
        \]
        where $\Upgamma(\cdot)$ is the gamma function.
    \end{clm}
    
    By Claim \ref{Af.lm4}, there exists a sufficiently small $\Hat{\upalpha}>1$ and $C_1>0$ for a $\kappa''>0$ such that
    \begin{eqnarray}
        \mathbb{P}\left(\left. \sum_{i=1}^n Y_i \leq \Hat{\upalpha}{n}\right|B\right) &\leq& \mathbb{P}\left(\left. \left\{ \forall i \in \{1, \dots, \lceil \sqrt{n} \rceil\}\left(\sigma_i\leq \Hat{\upalpha}{M_i} \right)\right\}^c\right|B\right) \nonumber\\
        &\leq& \sum_{i=1}^{\lceil\sqrt{n}\rceil}\mathbb{P}\left(\left.\sigma_i \geq \Hat{\upalpha}{M_i}\right|B\right) \label{eq.lm4.2}\\
        &\leq& C_1 \sum_{i=1}^{\lceil\sqrt{n}\rceil} \exp\left( -M_i^{\kappa''} \right) \nonumber
    \end{eqnarray}
    where $M_i= O\big(\sqrt{n}\big)$ for $n \to +\infty$. It suffices to observe that
    \[
     \mathbb{P}\big(T(e,x)\geq \Hat{\upalpha}{\|x\|_1}\big) \leq \mathbb{P}\big(T(e,x)\geq \Hat{\upalpha}{\|x\|_1}\;|\;B\big) + \mathbb{P}(B^c)
    \]
    and we complete the proof for case $\|x\|_1=n$ combining (\ref{eq.lm4.1}) and (\ref{eq.lm4.2}), which ensures the existence of $\Hat{C}, \kappa>0$ satisfying
    \begin{equation} \label{first.all}
        \mathbb{P}\left(T(e,x)\geq \Hat{\upalpha}{\|x\|_1}\right) \leq  \Hat{C}\exp{\left(-\|x\|_1^\kappa\right)}.
    \end{equation}
    
    We now turn to the case $\|x\|_1<n$. Let $y \in \Gr$ be  such that $\|y\|_1 =n$ and $\|xy^{-1}\|_1 \geq n$. Observe that $\|xy^{-1}\|_1 < 2n$. Due to the subadditivity, we get $T(e,x) \leq T(e,y) + T(y,x)$ and by \eqref{first.all}, one has
    \begin{align*}
    \mathbb{P}\left(T(e,x)\geq 3\Hat{\upalpha}{n}\right) &\leq  \mathbb{P}\left(T(e,y)\geq \Hat{\upalpha}{n}\right) + \mathbb{P}\left(T(e,x)\geq \Hat{\upalpha}{\|xy^{-1}\|_1}\right)\\
    &\leq 2\Hat{C}\exp(-n^\kappa).
    \end{align*}
    Now, $C>0$ and $\upalpha>1$ can be conveniently chosen to arrive to the desired conclusion.
\end{proof}

\section{Asymptotic Shape} \label{sec.asymp.shape}

Before proving the asymptotic shape theorem, we present some basic concepts and results which will be useful for comparing $\Gr$ and $\Gr'$ (see \cite{cantrell2017,pansu1983} for further details).

\begin{dfntn}[Ergodic actions on probability spaces]
    Let $(\Gr,.)$ be a group and let $(\Omega, \mathscr{F}, \mathbb{P})$ be a probability space. A group action $\Gr \curvearrowright (\Omega,\mathscr{F},\mathbb{P})$ is said to be \emph{ergodic} if, given $A \in \mathscr{F}$ such that $xA = A ~\mbox{a.s.}$ for all $x \in \Gr$, then $\mathbb{P}(A) \in \{0,1\}$.
\end{dfntn}

\begin{dfntn}[Subadditive cocycle]
    Given a group $(\Gr,.)$ and a group action $\Gr \curvearrowright (\Omega,\mathscr{F},\mathbb{P})$, a function $c : \Gr \times \Omega \to \mathbb{R}^+$ is called a \emph{subaddittive cocycle} if
    \[
        c(xy, \omega) \leq c(y,\omega) + c(x,y\cdot\omega)
    \]
    for any $x,y \in \Gr$.
\end{dfntn}

 It is easy to verify that $T$ is subadditive, \emph{i.e.},
\[
    T(x,y) \leq T(x,z) + T(z,y)
\]
for any $x,y,z \in \Gr$, (see \cite[p.~538]{alves2002} where the authors proved the subadditivity on $\Z^D$). Now we can check that $c: \Gr \times \Omega \to \R_+$ with $c(x,\omega) := T(e,x)(\omega)$ is a subadditive cocycle considering the group action $\Gr \curvearrowright (\Omega,\mathscr{F}, \mathbb{P})$ given by $\pi_x(y\cdot \omega) = (x_0y^{-1}, (\xi_i)_{i\in\N^*})$ for any $x,y \in \Gr$ with $\pi_x(\omega) = (x_0,(\xi_i)_{i\in\N^*})$. This implies that $T(e,x)(y\cdot\omega) = T(y,xy)(\omega)$. 

Note that the group action defined above is ergodic since $\mathbb{P}$ is the product measure. Given $A \in \mathscr{F}$ such that $yA = A ~\mbox{a.s.}$ for all $y \in \Gr$, $\mathbb{P}_x(\pi_x[A]) = \mathbb{P}_y(\pi_y[A])$ for any $x,y \in \Gr$. Thus $\mathbb{P}(A) = \prod\limits_{x \in \Gr}\mathbb{P}_x(\pi_x[A]) \in \{0,1\}$.

To study the behavior of the asymptotic cone in the shape theorem, we follow the procedure adopted by \citet{pansu1983} and described by \citet[\S 2]{cantrell2017} in the construction of the limit space via quotient of $\Gr$ by the torsion subgroup of $N$ nilpotent. From now on we assume $\Gr' : = N / \tor N$ and $e' := \tor N$. We define the random variable $T'(x',y')$ by
\[
    T'(x',y') := \max\big\lbrace T(x,y) ~:~x.\tor N = x',~y.\tor N = y'  \big\rbrace
\]
for every $x', y' \in \Gr'$.

Consider the group action $\Gr' \curvearrowright (\Omega,\mathscr{F}, \mathbb{P})$ such that, for each $y .\tor N \in \Gr'$, we fix a ${y_0} \in y.\tor N$ and $\pi_x\big((y .\tor N)\cdot \omega\big) = (x_0y_0^{-1}, (\xi_i)_{i\in\N^*})$ where $\pi_x(\omega) = (x_0,(\xi_i)_{i\in\N^*})$.

\begin{lmm}
    The group action $\Gr' \curvearrowright (\Omega,\mathscr{F}, \mathbb{P})$ defined above is ergodic.
\end{lmm}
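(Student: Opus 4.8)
The plan is to show that the $\Gr'$-action is ergodic by reducing it to the already-established ergodicity of the $\Gr$-action $\Gr \curvearrowright (\Omega,\mathscr{F},\mathbb{P})$. First I would observe that the action of $\Gr'$ is defined through representatives: each $y.\tor N \in \Gr'$ acts by a chosen $y_0 \in y.\tor N$, so the $\Gr'$-action is essentially the restriction of the $\Gr$-action to the subgroup generated by these representatives. More precisely, since $\tor N$ is finite (as noted in the construction following Pansu's theorem), every coset $y.\tor N$ has finitely many representatives, and the orbit of any point under $\Gr'$ is contained in its orbit under $\Gr$.

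\emph{Key steps.} The argument proceeds as follows. Suppose $A \in \mathscr{F}$ satisfies $(y.\tor N)\cdot A = A$ a.s.\ for every $y.\tor N \in \Gr'$; I must show $\mathbb{P}(A) \in \{0,1\}$. The core idea is the same factorization used for the $\Gr$-action: because $\mathbb{P} = \prod_{x\in\Gr}\mathbb{P}_x$ is a product measure and the action permutes the coordinate spaces $\Omega_x$, invariance of $A$ forces $\mathbb{P}_x(\pi_x[A])$ to be constant in $x$ along each $\Gr'$-orbit. The subtlety is that $\Gr'$ may not act transitively on all of $\Gr$ via the coordinate permutation $\pi_x\big((y.\tor N)\cdot\omega\big) = (x_0 y_0^{-1},\dots)$; the shift $x_0 \mapsto x_0 y_0^{-1}$ moves the first coordinate by the fixed representative $y_0$. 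So I would first verify that the set $\{y_0 : y.\tor N \in \Gr'\}$ together with the neutral element generates enough of $\Gr$ to ensure the relevant coordinate probabilities are all equal. Since $\Gr' = N/\tor N$ and $N$ has finite index in $\Gr$, the representatives sweep out a finite-index subgroup, and combined with the finiteness of $\tor N$ this suffices to pin down $\mathbb{P}_x(\pi_x[A])$ as independent of $x$.

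Concretely, I would argue that for any $x, z \in \Gr$, the coordinate marginals satisfy $\mathbb{P}_x(\pi_x[A]) = \mathbb{P}_z(\pi_z[A])$, exactly as in the $\Gr$-case: the $\Gr'$-invariance transports the event between coordinate spaces, and each $\mathbb{P}_x$ is the same single-walk measure on $\Omega_x \cong \{x\}\times\Sgen^{\N^*}$ up to relabeling the base point. Once this constancy is established, the product structure gives
\[
    \mathbb{P}(A) = \prod_{x\in\Gr}\mathbb{P}_x(\pi_x[A]) = \prod_{x\in\Gr} p
\]
for a common value $p \in [0,1]$, and an infinite product of a constant $p$ is $0$ unless $p=1$, forcing $\mathbb{P}(A)\in\{0,1\}$.

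\emph{Main obstacle.} The delicate point is \textbf{not} the product-measure computation but verifying that the $\Gr'$-action, which is defined only up to the choice of representatives $y_0$ and acts by the somewhat indirect coordinate-permutation rule, genuinely moves the first coordinate across all of (or a sufficiently large, measure-determining portion of) $\Gr$. I expect the heart of the proof to be checking that this representative-based action is well-defined on $\mathscr{F}$-measurable invariant sets up to null sets and that it induces the same coordinate-equality $\mathbb{P}_x(\pi_x[A]) = \mathbb{P}_y(\pi_y[A])$ that drove the $\Gr$-case. Because $\tor N$ is finite and $[\Gr:N] < \infty$, any ambiguity in the choice of $y_0$ changes the base point only within a finite set, which does not affect the single-coordinate marginals; this is what ultimately makes the reduction to the already-proven ergodicity of the $\Gr$-action go through.
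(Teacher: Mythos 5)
Your overall architecture matches the paper's proof — exploit that $\mathbb{P}$ is a product measure, argue that invariance of $A$ makes coordinate marginals constant, and conclude via an infinite product — but the specific constancy you assert is stronger than what the $\Gr'$-action delivers, and it can fail. The action moves the coordinate at $x$ to the coordinate at $xy_0^{-1}$, so $\Gr'$-invariance of $A$ only yields $\mathbb{P}_x(\pi_x[A]) = \mathbb{P}_{xw}(\pi_{xw}[A])$ for $w$ in the subgroup $H \leq \Gr$ generated by the fixed representatives $\{y_0\}$; that is, constancy along $H$-orbits. Nothing forces $H = \Gr$: take $\Gr = \Z \oplus \Z_2$ with $\tor \Gr = \{0\}\oplus\Z_2$ and choose every representative inside $\Z\oplus\{0\}$; then the orbit of the coordinate indexed by $(0,1)$ never meets the coordinate indexed by $(0,0)$, and no instance of invariance compares their marginals. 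Your appeal to the finiteness of $\tor N$ and to ``ambiguity in the choice of $y_0$'' does not repair this, because the action is defined with the representatives fixed once and for all — varying $y_0$ produces a different action, not additional invariance of $A$. So the step ``this suffices to pin down $\mathbb{P}_x(\pi_x[A])$ as independent of $x$'' is a genuine gap, and it is precisely the point where the paper's proof takes a different, coset-blocked route.

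The gap is localized and fixable in two ways. The paper's way: do not compare individual coordinates but the products of marginals over whole torsion cosets, $\prod_{x \in y.\tor N}\mathbb{P}_x(\pi_x[A])$; the action carries the block of coordinates indexed by one coset bijectively onto the block indexed by another, and transitivity of $\Gr'$ on the cosets makes this block quantity constant, so $\mathbb{P}(A)$ becomes an infinite product (over the infinitely many cosets, since $\Gr'$ is infinite when $D \geq 3$) of a single constant, hence lies in $\{0,1\}$. Alternatively, your own weaker conclusion already suffices and you did not need full constancy at all: every $H$-orbit is infinite, so in $\prod_{x}\mathbb{P}_x(\pi_x[A])$ any factor strictly below $1$ recurs infinitely often, again forcing the product into $\{0,1\}$. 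One caveat you share with the paper: the identity $\mathbb{P}(A) = \prod_{x}\mathbb{P}_x(\pi_x[A])$ is literally valid only for product-type events, and both you and the paper use it as shorthand for the standard approximation-by-cylinder-events argument for invariant sets under product measures; since the paper argues at the same level of rigor, I do not count that against you.
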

\begin{proof}
    Let $A \in \mathscr{F}$  be such that $A = x'A ~ \mbox{a.s.}$  for all $x' \in \Gr'$. Then $\prod\limits_{x \in y.\tor N}\mathbb{P}_x(\pi_x[A])$ assumes the same value for all $y.\tor N \in \Gr'$. Hence
    \[
        \mathbb{P}(A) = \prod_{y.\tor N \in \Gr'} \left( \prod\limits_{x \in y.\tor N}\mathbb{P}_x(\pi_x[A]) \right) \in \{0,1\},
    \]
    since $\Gr'$ defines a partition of $\Gr$ and $\mathbb{P}$ is a product probability measure.
\end{proof}

\begin{lmm}
    The function $c': \Gr' \hspace{-0.07cm}\times \Omega \to \R_+$ given by $c'(x',\omega) = T'(e',x')(\omega)$ is a subadditive cocycle.
\end{lmm}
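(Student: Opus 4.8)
The plan is to verify the subadditive cocycle inequality
\[
    c'(x'y', \omega) \leq c'(y',\omega) + c'(x', y'\cdot\omega)
\]
for all $x',y' \in \Gr'$, by transferring the already-established subadditivity of $T$ (hence of $c$) through the quotient map $\Gr \to \Gr' = N/\tor N$. Unwinding definitions, this amounts to showing
\[
    T'(e', x'y') \leq T'(e', y') + T'(e', x')\circ(y'\cdot)
\]
in the appropriate almost-sure sense, where I must be careful that $T'$ is defined as a \emph{maximum} over coset representatives and that the $\Gr'$-action shifts $\omega$ via the chosen representatives $y_0 \in y.\tor N$.

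\textbf{First I would} record the translation identity for $T'$ analogous to $T(e,x)(y\cdot\omega) = T(y,xy)(\omega)$ established for $c$. Fixing representatives $y_0 \in y.\tor N$ as in the definition of the action, I would argue that $T'(e', x')(y'\cdot\omega) = T'(y', x'y')(\omega)$, where on the right $T'(y',x'y') := \max\{T(u,v) : u.\tor N = y', v.\tor N = x'y'\}$. This is the key reindexing step: the maximum over representatives on the left, combined with the representative-based shift of $\omega$, should reproduce the maximum over representatives on the right. Here I would lean on the finiteness of $\tor N$ (so the maxima are over finite sets and are attained) and on the fact that $\mathbb{P}$ is a product measure indexed by $\Gr$, so shifting the $\omega$-coordinates by a group element merely permutes the finitely many relevant factors.

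\textbf{Next I would} chain the inequalities. Given the subadditivity $T(u,w) \leq T(u,v) + T(v,w)$ on $\Gr$, I pick representatives achieving the maxima defining $T'(e', x'y')$ on the left-hand side, insert an intermediate representative $v$ of the coset $y'$, and bound each of the two resulting terms by the corresponding $T'$ quantity (which is itself a maximum, hence an upper bound for any fixed choice). Concretely, writing the representative of $x'y'$ as $x'y'$-level and splitting through a representative of $y'$, one obtains
\[
    T(e, wx'y') \leq T(e, vy') + T(vy', wx'y')
\]
for suitable representatives, and then passing to maxima on each side yields the cocycle inequality after applying the translation identity to rewrite the second term as $c'(x', y'\cdot\omega)$.

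\textbf{The hard part will be} handling the maximum in the definition of $T'$ cleanly: because $T'$ is a max over coset representatives while the cocycle law involves a shift of $\omega$ by a \emph{specific} chosen representative $y_0$, one must confirm that the representative attaining the left-hand maximum can be compatibly decomposed through the representative implicit in the action, and that replacing an attained maximum by a bounding maximum preserves the inequality direction throughout. This bookkeeping — keeping track of which coset each of the finitely many representatives lives in and verifying the shift matches — is where an error is most likely to creep in, and I would write it out explicitly for at least one side rather than appealing to symmetry.
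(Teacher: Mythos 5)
Your proposal is correct and takes essentially the same route as the paper's proof: expand $T'(e',x'y')$ as a maximum over coset representatives, split through a representative of $y'$ using the subadditivity of $T$, and combine the shift identity $T(a,b)(y'\cdot\omega)=T(ay_0,by_0)(\omega)$ with the fact that a maximum of sums is bounded by the sum of maxima. The translation identity you propose to record first, $T'(e',x')(y'\cdot\omega)=T'(y',x'y')(\omega)$, does hold --- indeed pointwise in $\omega$, since normality of $\tor N$ makes the reindexing of both cosets under right multiplication by the chosen representative $y_0$ exact --- so the bookkeeping you flag as the hard part goes through just as you outline.
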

\begin{proof}
    Let $z' \in \Gr'$ and $z \in z'$ such that $z'\cdot \omega = z\cdot\omega$. Then
    \begin{eqnarray*}
        T'(e',x'z')(\omega) &=& \max \{ T(y,xz)(\omega) : y \in \tor N,x \in x'\}\\
        &\leq& \max \{ T(u,z)(\omega) + T(v,x)(z\cdot \omega): u,v \in \tor N,x \in x'\}\\
        &\leq& T'(e',z')(\omega) + T'(e',x')(z'\cdot \omega)
    \end{eqnarray*}
    since the maximum function is subadditive
\end{proof}

\begin{lmm} \label{lm.T.bounded.torsion.as}
    Let $\varepsilon >0$ and let $r \geq 1$. Then there exists $n_0 \in \N$ such that for all $n \geq n_0$, for all $z_1,z_2 \in \tor N$, every $x_1 \in B(e,rn)$ and any $x_2 \in x_1.\tor N$, one has
    \[
        |T(z_1,x_1) - T(z_2,x_2)| < \varepsilon n \quad \mbox{a.s.}
    \]
    
\end{lmm}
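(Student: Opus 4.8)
The statement asserts that the activation time $T$ is asymptotically insensitive—up to an error of order $o(n)$—both to the choice of a torsion element as starting point ($z_1$ versus $z_2$) and to the choice of representative within a coset of $\tor N$ ($x_1$ versus $x_2$). My plan is to control these two discrepancies separately using subadditivity of $T$ together with the exponential tail bound from Proposition \ref{prop.T.finite.as}, and then to upgrade a collection of pointwise almost-sure estimates to the claimed uniform-over-$B(e,rn)$ statement via a Borel--Cantelli argument.

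First I would observe that since $\tor N$ is a \emph{finite} subgroup, there is a constant $R_0 := \max\{\|z\|_1 : z \in \tor N\}$ bounding the word-length of every torsion element, and hence the number of cosets-representatives-differences one must track is finite. The key mechanism is that for any two vertices $u,v$ with $uv^{-1} \in \tor N$ (so at bounded distance $\|uv^{-1}\|_1 \le R_0$), subadditivity gives
\[
    T(z_1,x_1) \leq T(z_1,z_2) + T(z_2,x_2) + T(x_2,x_1),
\]
and symmetrically with the roles reversed, so that
\[
    |T(z_1,x_1) - T(z_2,x_2)| \leq T(z_1,z_2) + T(z_2,z_1) + T(x_1,x_2) + T(x_2,x_1).
\]
Each of the four terms on the right is an activation time between two points lying at distance at most $R_0$ of each other, i.e. $\|x_1 x_2^{-1}\|_1 \le R_0$ and $\|z_1 z_2^{-1}\|_1\le R_0$. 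Thus it suffices to show that, almost surely and uniformly over all such bounded-distance pairs inside the relevant ball, each $T(u,v)$ with $\|uv^{-1}\|_1 \le R_0$ is eventually smaller than $(\varepsilon/4)\,n$.

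For this I would invoke Proposition \ref{prop.T.finite.as}: for a fixed displacement $g = v u^{-1}$ with $\|g\|_1 \le R_0$ there is $\beta>0$ and $C = C(g)$ with $\mathbb{P}(T(u,v) \ge t) \le C\exp(-t^\beta)$, and since there are only finitely many such displacements $g$ we may take a uniform constant $C_0 := \max_{\|g\|_1 \le R_0} C(g)$. Taking $t = (\varepsilon/4)n$ gives a tail $C_0\exp(-((\varepsilon/4)n)^\beta)$ for each individual pair $(u,v)$. The number of candidate pairs $(u,v)$ with $u \in B(e,rn)$ and $\|uv^{-1}\|_1\le R_0$ is at most $|B(e,rn)|\cdot|\tor N| \le c\,(rn)^D\,|\tor N|$ by polynomial growth. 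A union bound therefore yields
\[
    \mathbb{P}\Big(\exists\, (u,v): u\in B(e,rn),\ \|uv^{-1}\|_1\le R_0,\ T(u,v)\ge \tfrac{\varepsilon}{4}n\Big)
    \leq c\,(rn)^D\,|\tor N|\,C_0\,\exp\!\Big(-\big(\tfrac{\varepsilon}{4}n\big)^\beta\Big),
\]
and since a stretched-exponential decay dominates the polynomial prefactor, these probabilities are summable in $n$. By Borel--Cantelli, almost surely only finitely many of these bad events occur, so there is a (random, but then absorbed into the almost-sure statement) $n_0$ beyond which every such $T(u,v)$ is below $(\varepsilon/4)n$; combining the four terms gives $|T(z_1,x_1)-T(z_2,x_2)| < \varepsilon n$ for all $n\ge n_0$.

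The main obstacle is ensuring the claimed $n_0$ is \emph{uniform} over the uncountably-combinatorially-many choices allowed in the statement—the four free parameters $z_1,z_2,x_1,x_2$—rather than merely pointwise almost sure for each fixed tuple. This is exactly why the union bound over $B(e,rn)$ (whose size is controlled by polynomial growth) together with the stretched-exponential tail is essential: it is what lets a single Borel--Cantelli argument deliver one $n_0$ handling all admissible tuples simultaneously. A secondary subtlety is that $T$ is not symmetric, so one must be careful to bound both $T(u,v)$ and $T(v,u)$, but since both directions have displacements of word-length $\le R_0$ they fall under the same finite collection and the same uniform tail constant $C_0$ applies.
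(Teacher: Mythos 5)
Your proposal is correct and follows essentially the same route as the paper's proof: subadditivity reduces the difference to activation times across bounded torsion displacements, the stretched-exponential tail of Proposition \ref{prop.T.finite.as} (made uniform over the finite set of displacements, using that $T(e,g)$ and $T(u,gu)$ are identically distributed) is combined with a union bound over the polynomially-growing ball $B(e,rn)$, and Borel--Cantelli delivers the almost-sure $n_0$. The only cosmetic slip is that in the reversed pair $(x_2,x_1)$ the base point lies in $B(e,rn+R_0)$ rather than $B(e,rn)$, which is harmless since enlarging the ball leaves the bound $c'n^D$ intact.
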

\begin{proof}
    We first observe that it follows from the subadditivity of $T$ that
     \[
        |T(z_1,x_1) - T(z_2,x_2)| \leq \max\{T(x,yx): y \in \tor N, x \in x_1.\tor N\} +\max_{z,\tilde{z} \in \tor N} \{T(z,\tilde{z}),T(\tilde{z},z)\} .
     \]
     
     By Proposition \ref{prop.T.finite.as} and the finiteness of $\tor N$, there exist constants $C', \beta> 0$ such that 
     \[
        \mathbb{P}\left( T(e,y) \geq \varepsilon n : y \in \tor N \right) \leq C' e^{-\varepsilon^\beta n^\beta}.
     \]
     
     Since $T(e,y)$ and $T(x,yx)$ are identically distributed for every $x \in \Gr$, there exists a constant $c' >0$ such that
     \begin{eqnarray*}
        \mathbb{P}\left( \max\limits_{x \in B(e,rn)}\left\lbrace T(\tilde{x},y\tilde{x}): y \in \tor N, \tilde{x} \in x.\tor N \right\rbrace \geq \varepsilon n \right) &\leq& C'\frac{|B(e,rn)|}{e^{\varepsilon^\beta n^\beta}}\\
        &\leq& c'\frac{n^D}{e^{\varepsilon^\beta n^\beta}}.
     \end{eqnarray*}
     
     The desired result follows by application of Borel-Cantelli lemma.
\end{proof}

Recall that we are considering that $\mathcal{C}(\Gr,\Sgen)$ has polynomial growth and therefore $\Gr$ is virtually nilpotent. Then $N$ is a nilpotent normal subgroup and has finite index. Let $[\Gr:N] = m$ and fix $y_{1}, \dots, y_{m} \in \Gr$ as the representatives for each coset $N_{(i)} = y_{i}.N \in \Gr/N$. We will show below a result similar to Lemma \ref{lm.T.bounded.torsion.as} comparing $\Gr$ to $N$. 

\begin{lmm} \label{lm.T.nilpotent}
    Let $\varepsilon >0$ and let $r \geq 1$. Then there exists $n_0 \in \N$ such that for all $n \geq n_0$ and every $x \in B(e,rn)$, one has
    \[
        \left|T(e,x) - T\left(e,x_{(j)}\right)\right| < \varepsilon n \quad \mbox{a.s.}
    \]
    where $x \in N_{(j)}$ and $x_{(j)}:= y_{j}^{-1}.x \in N$.
\end{lmm}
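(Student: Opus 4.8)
The plan is to mirror the proof of Lemma \ref{lm.T.bounded.torsion.as}, replacing the finite torsion subgroup $\tor N$ with the finite set of coset representatives $\{y_1, \dots, y_m\}$. The starting point is the subadditivity of $T$: since $x = y_j x_{(j)}$ whenever $x \in N_{(j)}$, applying subadditivity in both directions gives
\begin{equation*}
    \left|T(e,x) - T(e,x_{(j)})\right| \leq \max\big\{ T(x_{(j)}, y_j x_{(j)}),\, T(y_j x_{(j)}, x_{(j)}) \big\}.
\end{equation*}
Thus the difference is controlled entirely by the cost of traversing the fixed group element $y_j$ (or its inverse), independently of where $x$ sits in the ball.

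First I would exploit translation invariance coming from the group action: $T(x_{(j)}, y_j x_{(j)})$ is identically distributed to $T(e, y_j)$, and $T(y_j x_{(j)}, x_{(j)})$ has the same law as $T(e, y_j^{-1})$. Because the collection $\{y_j^{\pm 1} : 1 \leq j \leq m\}$ is finite, Proposition \ref{prop.T.finite.as} supplies a single constant $C'' > 0$ and $\beta > 0$ (taking the worst of the finitely many constants over $w$ in that set) such that $\mathbb{P}\big(T(e,w) \geq \varepsilon n\big) \leq C'' \exp(-\varepsilon^\beta n^\beta)$ for every such $w$. This makes the per-site tail estimate uniform over both the coset index $j$ and the choice of $x$.

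Next I would pass to a union bound over the ball. Since $|B(e,rn)| \leq c(rn)^D$ by polynomial growth, summing the per-site estimate over all $x \in B(e,rn)$ yields
\begin{equation*}
    \mathbb{P}\Big( \exists\, x \in B(e,rn) : \left|T(e,x) - T(e,x_{(j)})\right| \geq \varepsilon n \Big) \leq 2 C'' c\, (rn)^D \exp(-\varepsilon^\beta n^\beta).
\end{equation*}
The right-hand side is summable in $n$, as the stretched-exponential factor dominates the polynomial prefactor. An application of the Borel-Cantelli lemma then shows that the event inside the probability occurs for only finitely many $n$ almost surely, which produces the required $n_0$.

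The argument is essentially routine once the reduction via subadditivity is in place; the only point requiring any care is ensuring the uniformity over the coset index $j$ in the tail estimate, and this is immediate because there are finitely many cosets and we simply take the worst of the finitely many constants furnished by Proposition \ref{prop.T.finite.as}. The conceptual content is just that $x$ and $x_{(j)}$ differ by a bounded group element, so their activation times cannot differ by more than an $o(n)$ term with overwhelming probability, uniformly across the polynomially many sites of $B(e,rn)$.
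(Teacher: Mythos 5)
Your proposal is correct and follows essentially the same route as the paper: reduce via subadditivity to $\max\{T(x,x_{(j)}),T(x_{(j)},x)\}$, use the translation invariance $T(z,yz)\overset{d}{=}T(e,y)$ together with Proposition \ref{prop.T.finite.as} (taking the worst constant over the finite set $\{y_i^{\pm 1}\}$), apply a union bound over the polynomially many sites of $B(e,rn)$, and conclude with Borel--Cantelli. The paper's proof is just a terser version of exactly this argument, explicitly modeled on Lemma \ref{lm.T.bounded.torsion.as} as you anticipated.
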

\begin{proof}
     The proof follows by the same method used in Lemma \ref{lm.T.bounded.torsion.as}. Observe that 
     \[
        \left|T(e,x) - T\left(e,x_{(j)}\right)\right| \leq \max\left\{T\left(x,x_{(j)}\right),T\left(x_{(j)},x\right)\right\}.
     \]
     
     It follows from Proposition \ref{prop.T.finite.as} that there exists a constant $\Tilde{C} >0$
     \[
        \mathbb{P}\left( T(e,y) \geq \varepsilon n : y \in \left\{ y_
        {i}^{\pm 1}: i \in \{1,\dots, m\}\right\} \right) \leq \Tilde{C} e^{-\varepsilon^\beta n^\beta}.
     \]
     
     Since $T(e,y)$ and $T(z,yz)$ are identically distributed for every $z \in \Gr$, there exists a constant $\Tilde{c} >0$ such that
     \begin{eqnarray*}
        \mathbb{P}\left( \max\limits_{x \in B(e,rn)}\big\lbrace T\left(x,x_{(j)}\right), T\left(x_{(j)},x\right): x \in N_{(j)}\in\Gr/N \big\rbrace \geq \varepsilon n \right) &\leq& \Tilde{c}\frac{n^D}{e^{\varepsilon^\beta n^\beta}}.
     \end{eqnarray*}
     
     We now apply the Borel-Cantelli lemma completing the proof.
\end{proof}

As a consequence of the lemma above, $T(e,x)$ and $T(e,x_{(j)})$ are asymptotically equivalent $a.s.$ as $\|x\|_1\to \infty$. Therefore some results on the asymptotic behaviour for nilpotent groups may be extended to $T$ on $\Gr$ $a.s.$ We state below, without proof, a proposition from  \citet[p.~128]{austin2016} and improved by \citet[p.~1328]{cantrell2017}.

\begin{prpstn} \label{ergodic.cocicle.nilpotent.thm}
    Let $\Lambda$ be a finitely generated torsion-free nilpotent group. Consider a subadditive cocycle $ \mathbf{c}: \Lambda \hspace{-0.09cm}\times \Omega \to \R_+$ such that $\mathbf{c}({\lambda}, -) \in L^1(\Omega, \mathscr{F}, \mathbb{P})$ for all $\lambda \in \Lambda$ associated to an ergodic group action $\Lambda \curvearrowright (\Omega, \mathscr{F}, \mathbb{P})$. Then there exists a set $\Omega' \subseteq \Omega$ such that $\mathbb{P}(\Omega')=1$ and, for a given $\omega \in \Omega'$ and $x \in \Lambda$,
    \[
        \lim_{n\to\infty} \frac{1}{n}\mathbf{c}\big(\lambda^n,\omega\big) = \phi(\lambda^{\text{ab}}),
    \]
    where $\phi:\Lambda^{\text{ab}}\otimes \R \to \R_+$ is homogeneous, subadditive and uniquely defined on its domain. Here $\Lambda^{\text{ab}} := \Lambda/[\Lambda,\Lambda]$ is the abelianization of $\Lambda$ and $\lambda^{\text{ab}} := \lambda[\Lambda,\Lambda]$. In particular,
    \[
        \phi(\lambda^{\text{ab}}) = \inf_{n \geq 1} \left\lbrace \frac{1}{n}\mathbb{E}\big[\mathbf{c}\big({\lambda_1}^n,-\big)\big] : {\lambda}^{\text{ab}} = \lambda_1^{\text{ab}} \right\rbrace.
    \]
\end{prpstn}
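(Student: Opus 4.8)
The plan is to run Kingman's subadditive ergodic theorem along each cyclic subgroup and then upgrade the resulting a priori random limits to deterministic constants by combining full ergodicity with the nilpotent structure. Throughout write $T_g\omega := g\cdot\omega$ for the measure-preserving transformation induced by $g\in\Lambda$, so that $T_{xy}=T_xT_y$ and the cocycle bound reads $\mathbf{c}(xy,\omega)\le\mathbf{c}(y,\omega)+\mathbf{c}(x,T_y\omega)$. Fixing $\lambda\in\Lambda$ and specializing this with $x=\lambda^m$, $y=\lambda^n$ gives $\mathbf{c}(\lambda^{m+n},\omega)\le\mathbf{c}(\lambda^n,\omega)+\mathbf{c}(\lambda^m,T_\lambda^n\omega)$, which is precisely the hypothesis of Kingman's theorem for the $T_\lambda$-action; since $\mathbf{c}(\lambda,-)\in L^1$ and $\mathbf{c}\ge 0$, I obtain a $T_\lambda$-invariant function $\ell_\lambda\in L^1$ with $\tfrac1n\mathbf{c}(\lambda^n,\omega)\to\ell_\lambda(\omega)$ almost surely and $\mathbb{E}[\ell_\lambda]=\inf_{n\ge1}\tfrac1n\mathbb{E}[\mathbf{c}(\lambda^n,-)]$.

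Two reductions then make $\ell_\lambda$ deterministic. First, for any $f\in L^1$ and measure-preserving $T$ one has $\tfrac1n f(T^n\omega)\to 0$ almost surely, since $\sum_n\mathbb{P}(|f|>\varepsilon n)<\infty$ for $f\in L^1$ and Borel–Cantelli applies to the events $\{|f\circ T^n|>\varepsilon n\}$; applied to $f=\mathbf{c}(g^{\pm1},-)$ this lets me discard single-element cocycle contributions wherever they appear. Second, using $\lambda^n g=g\mu^n$ with $\mu:=g^{-1}\lambda g$ and the cocycle inequality in both directions, the previous fact yields $\ell_\lambda(T_g\omega)=\ell_{\mu}(\omega)$ almost surely. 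Granting the dependence-on-abelianization claim proved below, so that $\ell_\mu=\ell_\lambda$ because $\mu^{\mathrm{ab}}=\lambda^{\mathrm{ab}}$, this shows $\ell_\lambda$ is invariant under every $T_g$, hence constant almost surely by ergodicity; call the constant $\phi(\lambda^{\mathrm{ab}})$.

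The heart of the argument, and the step I expect to be the main obstacle, is that the limit depends only on the abelianization class: if $\lambda^{\mathrm{ab}}=\mu^{\mathrm{ab}}$, equivalently $\lambda\mu^{-1}\in[\Lambda,\Lambda]$, then $\ell_\lambda=\ell_\mu$ almost surely. Writing $\mu^n=\lambda^n w_n$ with $w_n\in[\Lambda,\Lambda]$, the nilpotency of $\Lambda$ forces the word length to grow sublinearly, $\lvert w_n\rvert_S=o(n)$, since the relevant correction terms lie deeper in the lower central series and powers of such elements have sublinear length. Expanding $\mathbf{c}(\lambda^n w_n,\omega)$ by subadditivity along a geodesic word for $w_n$ then bounds $|\mathbf{c}(\mu^n,\omega)-\mathbf{c}(\lambda^n,\omega)|$ by a sum of at most $\lvert w_n\rvert_S=o(n)$ evaluations of $\Psi:=\max_{s\in S}\mathbf{c}(s,-)\in L^1$ at the points $T_{u_j}\omega$ visited along the word. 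The remaining task is thus to prove $\tfrac1n\sum_{j=1}^{o(n)}\Psi(T_{u_j}\omega)\to 0$ almost surely, i.e.\ that sublinearly many shifts of an $L^1$ function summed along a path in $\Lambda$ contribute nothing in the limit; I would establish this from a maximal ergodic inequality for the $\Lambda$-action.

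It remains to record the structure of $\phi$. Integer homogeneity $\phi(k\lambda^{\mathrm{ab}})=k\,\phi(\lambda^{\mathrm{ab}})$ follows by passing to the subsequence $(\lambda^k)^n$ in the definition of the limit, while subadditivity $\phi((\lambda\mu)^{\mathrm{ab}})\le\phi(\lambda^{\mathrm{ab}})+\phi(\mu^{\mathrm{ab}})$ follows from the cocycle inequality and the boundary-term reduction; together these extend $\phi$ uniquely by continuity to a homogeneous, subadditive function on $\Lambda^{\mathrm{ab}}\otimes\mathbb{R}$, and the stated infimum formula is exactly the Kingman mean identity of the first paragraph combined with the constancy established above. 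Finally, since $\Lambda$ is finitely generated, intersecting the countably many almost-sure events arising above, one for each generator and, through subadditivity, for each $\lambda\in\Lambda$, produces a single $\Omega'$ with $\mathbb{P}(\Omega')=1$ on which all the limits hold simultaneously.
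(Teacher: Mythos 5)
The first thing to note is that the paper offers no proof of this proposition to compare against: it is stated explicitly ``without proof,'' imported from \citet{austin2016} as improved by \citet{cantrell2017}. Your outline does reconstruct the genuine architecture of those proofs: Kingman's theorem along each cyclic subgroup, the $L^1$ Borel--Cantelli fact $\frac{1}{n}f(T^n\omega)\to 0$ to discard boundary terms, the conjugation identity $\ell_\lambda\circ T_g=\ell_{g^{-1}\lambda g}$, reduction modulo $[\Lambda,\Lambda]$ via sublinear tracking, and ergodicity of the full action to turn the invariant limit into a constant. The elementary steps you actually carry out (the Kingman application and its mean identity, the single-element reductions, the conjugation computation, the extension of $\phi$ by homogeneity and subadditivity to $\Lambda^{\text{ab}}\otimes\mathbb{R}$) are correct, and the infimum formula does follow once constancy and dependence only on the abelianization class are in hand.

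But there is a genuine gap, and it is exactly the step you flag and defer: the almost-sure bound $\frac{1}{n}\sum_{j=1}^{k_n}\Psi(T_{u_j}\omega)\to 0$ with $k_n=o(n)$ and only $\Psi\in L^1$. Markov's inequality gives $\mathbb{P}\bigl(\sum_{j\le k_n}\Psi\circ T_{u_j}>\varepsilon n\bigr)\le k_n\,\mathbb{E}[\Psi]/(\varepsilon n)\to 0$, i.e.\ convergence in probability only; a union-bound/Borel--Cantelli upgrade would require second moments (the sum $\sum_n k_n\,\mathbb{P}(\Psi>\varepsilon n/k_n)$ diverges in general for $\Psi\in L^1$); and a Wiener-type maximal ergodic inequality for the $\Lambda$-action controls averages over F\o lner balls, not sums of $o(n)$ shifts at locations $u_j$ that depend on $n$ and drift with $\lambda^n$ (your $u_j$ sit near $\lambda^n$, not in a fixed ball around $e$). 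Closing precisely this is the content of the almost-sure improvement the paper attributes to \citet{cantrell2017}, and it requires their finer truncation and decomposition arguments; it cannot be dispatched with ``a maximal ergodic inequality'' as stated. A secondary, lesser gap: the sublinearity $|w_n|_S=o(n)$ for $\mu^n=\lambda^n w_n$ with $\lambda^{-1}\mu\in[\Lambda,\Lambda]$ is true but is itself a distortion estimate for the lower central series (elements of $C^i(\Lambda)$ have $n$-th powers of word length $O(n^{1/i})$) and should be proved or cited rather than asserted. As written, your text is a faithful road map to the known proof with its hardest stretch left unbuilt, so it does not constitute an independent proof of the proposition.
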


We denote by $\mathfrak{g}_\infty$ the Lie algebra associated with $G_\infty$. The algebra $\mathfrak{g}_\infty$ can be defined following the construction of the asymptotic cone from \citet{pansu1983} (also found in \cite{cantrell2017}). The limit space $G_\infty$ is also known as a Carnot group and we may write $\mathfrak{g}_\infty = \bigoplus_{i=1}^k\mathfrak{v}_i$. The homotheties $\delta_t$ from Theorem \ref{shape.thm} and Pansu's theorem are linear endomorphims of $\mathfrak{g}_\infty$ given by $\delta_t(v_j) = t^jv_j$ for $v_j\in \mathfrak{v_j}$ (see \cite{breuillard2014} for more details).

We call $\mathfrak{v}_1$ horizontal space. By abuse of notation, we may write $\delta_t(g) $ for $\exp_\infty(\delta_t(\log_\infty(g)))$, where $\exp_\infty$ is the exponential map and $g \in G_\infty$. As one may verify from the Baker-Campbell-Hausdorff formula, $\delta_t(g)\delta_t(g')=\delta_t(g.g')$. Also, for simplicity, we write $\phi((x')^{\text{ab}})$ for $x'\in \Gr'$, since $\mathfrak{g}_\infty^{\text{ab}} \cong \mathfrak{v}_1 \cong (\Gr')^{\text{ab}}\otimes\R$.

The corollary below follows directly from Proposition \ref{ergodic.cocicle.nilpotent.thm}, Lemma \ref{lm.T.nilpotent} and the integrability of $T$ given by Proposition \ref{prop.T.finite.as}, since $\Gr'$ is torsion-free and it suffices to verify the asymptotic behaviour of $T$ on $N$. 

\begin{crllr} \label{cor.subadd.cocycle}
    Let $x' \in \Gr'$, then there exists an unique homogeneous subadditive function $\phi: \mathfrak{g}_\infty^{\text{ab}} \to \R_+$ such that
    \[
        \lim_{n\to\infty}\frac{1}{n}T'(e',(x')^n) = \phi((x')^\text{ab}) \quad \mbox{a.s.}
    \]
    In particular, we get
    \[
        \phi((x')^\text{ab}) = \inf_{n \geq 1} \left\{ \frac{1}{n}\mathbb{E}\big[T'\big(e',{(z')}^n\big)\big]: z' \in x'[\Gr',\Gr']\right\}.
    \]
\end{crllr}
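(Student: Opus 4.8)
The plan is to obtain Corollary~\ref{cor.subadd.cocycle} as a direct application of Proposition~\ref{ergodic.cocicle.nilpotent.thm} to the group $\Lambda = \Gr' = N/\tor N$ and the cocycle $\mathbf{c} = c'$, where $c'(x',\omega) = T'(e',x')(\omega)$. First I would record that $\Gr'$ is finitely generated and torsion-free nilpotent: $N$ is a finitely generated nilpotent group (being a finite-index subgroup of the finitely generated $\Gr$), and quotienting by the finite normal subgroup $\tor N$ kills its torsion while preserving nilpotency and finite generation. Ergodicity of the action $\Gr' \curvearrowright (\Omega,\mathscr{F},\mathbb{P})$ and the subadditive cocycle property of $c'$ have already been established in the two lemmas preceding Lemma~\ref{lm.T.bounded.torsion.as}, so the only hypothesis of Proposition~\ref{ergodic.cocicle.nilpotent.thm} left to check is that $c'(x',-) \in L^1(\Omega,\mathscr{F},\mathbb{P})$ for every $x' \in \Gr'$.

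For the integrability, I would use that $\tor N$ is finite, so that $T'(e',x') = \max\{T(y,x) : y \in \tor N,\ x \in x'\}$ is a maximum over finitely many terms (each coset has exactly $|\tor N|$ elements). Writing $x = (xy^{-1})y$ and using that $T(y,(xy^{-1})y)$ has the same law as $T(e,xy^{-1})$, Proposition~\ref{prop.T.finite.as} gives an exponential tail bound $\mathbb{P}(T(y,x) \geq t) \leq C\exp(-t^\beta)$ for each of the finitely many pairs. A finite maximum of random variables with such tails again has an exponential tail, hence finite moments of all orders; in particular $T'(e',x')$ is integrable. This is the only step that requires an argument, and it is routine.

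With all hypotheses verified, Proposition~\ref{ergodic.cocicle.nilpotent.thm} yields an $\Omega' \subseteq \Omega$ with $\mathbb{P}(\Omega')=1$ on which $\tfrac{1}{n}c'((x')^n,\omega) = \tfrac{1}{n}T'(e',(x')^n)$ converges to a value depending only on the abelianized class $(x')^{\text{ab}}$, through a uniquely determined homogeneous subadditive function $\phi$ on $(\Gr')^{\text{ab}}\otimes\R$, together with the stated infimum formula (taking $\lambda = x'$ and $\lambda_1 = z'$, so that $\lambda_1^{\text{ab}} = \lambda^{\text{ab}}$ becomes $z' \in x'[\Gr',\Gr']$). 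Finally I would invoke the identification $(\Gr')^{\text{ab}}\otimes\R \cong \mathfrak{v}_1 \cong \mathfrak{g}_\infty^{\text{ab}}$ recorded just before the corollary to view $\phi$ as a function on $\mathfrak{g}_\infty^{\text{ab}}$, and Lemma~\ref{lm.T.nilpotent} to guarantee that this asymptotic behaviour of $T'$ on $\Gr'$ indeed reflects that of $T$ on all of $\Gr$, since the passage from $\Gr$ to $N$ only perturbs $T$ by $o(n)$. There is no serious obstacle here beyond the integrability check; the real content is packaged in Proposition~\ref{ergodic.cocicle.nilpotent.thm}.
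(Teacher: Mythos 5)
Your proposal is correct and takes essentially the same route as the paper, which derives the corollary directly from Proposition~\ref{ergodic.cocicle.nilpotent.thm} combined with the previously established ergodicity and subadditive-cocycle lemmas for $\Gr'$, the integrability supplied by Proposition~\ref{prop.T.finite.as}, and Lemma~\ref{lm.T.nilpotent} to relate the behaviour on $N$ back to $\Gr$. Your explicit $L^1$ check via the finiteness of $\tor N$ and the exponential tail bounds simply fills in a step the paper leaves implicit.
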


The abelian case is rather simple and will be used in the proof of the shape theorem. First, one can define $G_\infty$ to be such that $\Gr' \leq G_\infty$, where $\Gr'$ is a lattice in $G_\infty$. We simply have $\g_\infty \cong \g_\infty^{\text{ab}}\cong \Z^D \otimes \R \cong \R^D$ with the null bracket, then it is an abelian Lie Algebra. Futhermore, $\delta_t(v)=tv$ and $\delta_n(x')=(x')^n$ for $v \in \g_\infty$ and $x' \in \Gr'$. Now $d_\phi$ is induced by the quasinorm $\phi$ in the commutative case.

Although our main result considers only abelian groups, all the intermediate results were proved for $\Gr$ with polynomial growth rate $D\geq3$. Even though $\Gr' \cong \Z^D$,  it is not necessarily the case that the corresponding graph is isomorphic to the hypercubic $\Z^D$ lattice. Hence, we show that the torsion subgroup does not interfere in the limit shape and that this limit behavior does exist for every finite symmetric generator set $\Sgen$.

We now turn to the proof of the shape theorem. Let us first emphasize that the generalization for the case where $\mathcal{C}(\Gr,\Sgen)$ has polynomial growth (therefore $\Gr$ is virtually nilpotent) is not trivial. It would be still necessary to check how $\frac{1}{n}d_\omega$ converges on the horizontal subspaces of $G_\infty$, since it is a sub-Riemannian manifold when associated to $d_\infty$. In that case $\phi$ is not defined on $\g_\infty$ and $d_\phi$ is defined by admissible curves in $G_\infty$. 
     
Recent works on the shape theorem for the first passage percolation model considered the case where $\Gr$ is not necessarily abelian, namely -- Benjamini and Tessera in \cite{benjamini2015}, and, Cantrell and Furman in \cite{cantrell2017}. However, the frog model does not satisfy the hypotheses under which the results were shown. Benjamini and Tessera considered that the weight of each edge on the graph is given by i.i.d. random variables with exponential moment. Cantrell and Furman studied the case where $c(x,\omega)$ is bi-Lipschitz with respect to the word norm and the cocycle is conditioned to an additional innernerss assumption.

\begin{proof}[Proof of Theorem \ref{shape.thm}]
     We begin by observing that by Lemma \ref{lm.T.bounded.torsion.as} $T(e, x)$ is asymptotically equivalent to $T'(e',x.\tor\Gr)$ $a.s.$ as $\|x\|_1 \to \infty$. It follows from the definition of the model that $\|x\|_1 \leq T(e,x)$. The lower bound corresponds to the case in which the particle follows a $d$-geodesic on $\Gr$. Thus $B_\omega(e,n) \subseteq B(e,n)$. By an application of Lemma \ref{lm.T.bounded.torsion.as}, the Hausdorff distance between $\Gr$ and $\Gr'$ with respect to $\frac{1}{n}d_\omega$ converges to zero a.s. Then it suffices to prove the shape theorem for $T'$ on $\Gr'$ to get the desired conclusion.

     Observe that $\Sgen' = \{ s.\tor\Gr ~|~ s \in \Sgen \}$ is a finite symmetric generator set of $\Gr'$. We get from \eqref{structural.abelian.thm} and \S\ref{conv.metric.spaces} that $\Gr' \cong \Z^D$ and the limit space can be defined in such a way that $\Gr' \leq G_\infty$ where $G_\infty \cong \R^D$. Consider the norm $\|-\|_1'$ on the quotient space $\Gr'$ given by
     \[
        \|x'\|_1' = \inf \left\lbrace \|x\| ~:~ x.\tor\Gr = x' \right\rbrace.
     \]
     Under the conditions stated above, $G_\infty$ is equipped with a Riemannian metric $d_\infty$ associated with the rescaled metric $\frac{1}{n}d'$ of $\Gr'$ induced by $\|-\|_1'$.
     
    Let $\phi:\mathfrak{g}_\infty\to \R_+$ be given by Corollary \ref{cor.subadd.cocycle} and note that $\mathfrak{g}_\infty^{\text{ab}}\cong \Gr'\otimes\R\cong\R^D$. It is easily seen  that $\|x'\|_1' \leq \phi(x')$. Since $\phi$ is subadditive and $\Gr'$ is finitely generated, $\phi$ is Lipschitz. Thus we can apply the construction described in \cite[\S2.2-2.3]{cantrell2017} to verify that $d_\phi$ is bi-Lipschitz with respect to $d_\infty$.

    We now show that
    \begin{equation} \label{T.asymp.phi}
        \lim_{\|x'\|_1'\to +\infty} \frac{T'(e',x') - \phi(x')}{\|x'\|_1'} = 0 \quad a.s.,
    \end{equation}
    which is a rather standard technique to verify the abelian case.
    
    Let $y_n' \in \Gr'$ be such that $\|y_n'\|_1' \to +\infty$. We write $t \loz y'$ for $\delta_t(y')$ where $y' \in \Gr' \leq G_\infty$ and $t >0$. Since $\frac{1}{\|y_n\|_1'} \loz y_n$ is bounded, we can extract a convergent subsequence and there exists $g \in G_\infty$ such that
    \[
        \lim\limits_{n\to +\infty}d_\infty\left(\frac{1}{\|y_n\|_1'}\loz y_n,g\right)= 0.
    \]
    Let $\varepsilon>0$. Since $G_\infty$ is the asymptotic cone obtained by the rescaling $\frac{1}{n}d'$, there exist $z' \in \Gr'$ and $m' \in \N$ such that
    \[
        d_\infty\left(\frac{1}{m'}\loz z',g\right) < \varepsilon.
    \]
    Fix $h_n:= \left\lfloor \frac{\|y_n'\|_1'}{m'} \right\rfloor$, then one can easily see that
    \begin{align}
        d_\infty\left(y_n, h_n\loz z'\right) &\leq d_\infty\left(y_n,\frac{\|y_n'\|_1'}{m'}\loz z'\right)+\left| \frac{\|y_n'\|'_1}{m'} - h_n\right| d_\infty\left(e,z'\right) \nonumber \\
        &\leq \|y_n\|_1' d_\infty\left(\frac{1}{\|y_n'\|_1'}\loz y_n, \frac{1}{m'}\loz z'\right) + \|z'\|_1'. \label{ineq.hn}
    \end{align}
    
    Set $n_0 \in \N$ to be such that, for all $n \geq n_0$, $d_\infty\left( \frac{1}{\|y_n'\|_1'}\loz y_n',g \right)< \varepsilon$. Hence it follows from \eqref{ineq.hn} and $\|y_n\|_1' \to \infty$ that, for sufficiently large $n$,
    \begin{equation} \label{bound.ynzn}
        d_\infty(y_n',h_n\loz z') \leq 3 \varepsilon \|y_n\|_1'.
    \end{equation}
    Let us now write $z_n' := h_n\loz z' = (z')^{h_n} \in \Gr'$. Observe that it follows from subadditivity that
    \begin{equation} \label{bound.Tynzn}
        |T'(e',y_n')-T'(e',z_n')| \leq \max\{T'(y_n',z_n'), T'(z_n',y_n')\}.
    \end{equation}
    By Proposition \ref{prop.T.div.dist}, there exists $\Check{C}>0$ such that
    \begin{equation} \label{prob.T.final}
        \mathbb{P}\left(\sup_{\|x\|_1 \leq n} \left\{T(x_n,x ) \;:\; x_n \in y_n' \cup z_n' \right\} \geq \upalpha n\right) \leq \Check{C} \frac{n^D}{e^{n^{\kappa}}}.
    \end{equation}
    We apply Borel-Cantelli lemma to \eqref{prob.T.final} and we verify by \eqref{bound.ynzn} and \eqref{bound.Tynzn} that
    \begin{equation} \label{T.bound}
        |T'(e',y_n')-T'(e',z_n')| \leq 3\upalpha' \varepsilon \|y_n'\|_1' \quad a.s., \quad \text{for}~n \gg 1.
    \end{equation}
    Finally, by Corollary \ref{cor.subadd.cocycle}, \eqref{T.bound}, and the properties of $\phi$, there exist $\Hat{k}, K>0$ such that
    \begin{align*}
        &\begin{aligned}
            |T'(e',y_n') - \phi(y_n')| &\leq |T'(e',y_n')-T'(e',z_n')| + |T'(e',z_n') - \phi(h_n \loz z')|+ |\phi(h_n\loz z')- \phi(y_n)|\end{aligned}\\
        &\begin{aligned}
            \phantom{|T'(e',y_n') - \phi(y_n')|} &\leq 3\alpha'\varepsilon \|y_n'\|_1' + h_n\varepsilon + 3\Hat{k} \varepsilon\|y_n\|_1' \leq K \|y_n'\|_1' \varepsilon \quad\quad a.s.
        \end{aligned}
    \end{align*}
     for sufficiently large $n$, which proves \eqref{T.asymp.phi}. Now, we conclude that $T(e,x)$ is almost surely asymptotically equivalent to $\phi(x.\tor\Gr)$ and the proof is complete.
\end{proof}

\begin{acknowledgement}
\textit{Acknowledgements:}
We would like to thank the anonymous reviewers for their careful reading of the manuscript and the comments that eventually led to an improved presentation.
\end{acknowledgement}

\bibliographystyle{abbrvnat}
\bibliography{references}
\end{document}